\algrenewcommand\algorithmicrequire{\textbf{Input:}}
\algrenewcommand\algorithmicensure{\textbf{Output:}}
\newcolumntype{C}[1]{>{\centering\arraybackslash}p{#1}}
\definecolor{navy}{HTML}{2F729C} 
\newcommand{\p}{\mathfrak p}
\newtheorem{theorem}{Theorem}[section]
\newtheorem{lemma}[theorem]{Lemma}
\newtheorem{proposition}[theorem]{Proposition}
\newtheorem{corollary}[theorem]{Corollary}
\theoremstyle{definition}
\newtheorem{mainthms}{Theorem}
\newtheorem{maincor}[mainthms]{Corollary}
\newtheorem{maincon}[mainthms]{Conjecture}
\numberwithin{equation}{section}
\begin{document}

\title[Lower bounds for the modified Szpiro ratio]{Lower bounds for the modified Szpiro ratio}

\author{Alexander J. Barrios}
\address{Department of Mathematics, University of St. Thomas, St. Paul, MN 55105 USA}
\email{abarrios@stthomas.edu}

\date{}

\begin{abstract}
Let $E/\mathbb{Q}$ be an elliptic curve. The modified Szpiro ratio of $E$ is the quantity~$\sigma_{m}\!\left(  E\right)  =\log\max\!\left\{  \left\vert c_{4}^{3}\right\vert ,c_{6}^{2}\right\} /\log N_{E}$ where $c_{4}$ and $c_{6}$ are the invariants associated to a global minimal model of $E$, and $N_{E}$ denotes the conductor of $E$. In this article, we show that for each of the fifteen torsion subgroups $T$ allowed by Mazur's Torsion Theorem, there is a rational number $l_{T}$ such that if $T\hookrightarrow E\!\left(\mathbb{Q}\right)  _{\text{tors}}$, then $\sigma_{m}\!\left(  E\right)  >l_{T}$. We also show that this bound is sharp.
\end{abstract}

\subjclass{Primary 11G05, 11D75, 11J25}

\keywords{Elliptic Curves, ABC Conjecture, Modified Szpiro Conjecture}

\maketitle

\section{Introduction}

By an $ABC$ triple, we mean a triple $\left(  a,b,c\right)  $ where $a,b,c$ are relatively prime positive integers with $a+b=c$. The \textit{quality} of an $ABC$ triple $\left(  a,b,c\right)  $ is the quantity
\[
q\!\left(  a,b,c\right)  =\frac{\log c}{\log\!\left(  \operatorname{rad}%
\!\left(  abc\right)  \right)  }.
\]
Masser and Oesterl\'{e}'s $ABC$ Conjecture \cite{MR992208} states that for each $\epsilon>0$, there are finitely many $ABC$ triples with $q\!\left(  a,b,c\right)  >1+\epsilon$. In \cite[Theorem 4]{MR1635726}, Browkin, Filaseta, Greaves, and Schinzel showed that if the $ABC$ Conjecture holds, then the set of limit points of
\[
\left\{  q\!\left(  a,b,c\right)  \mid\left(  a,b,c\right)  \text{ is an
}ABC\text{ triple}\right\}
\]
is $\left[  \frac{1}{3},1\right]  $. In fact, they showed unconditionally that the set of limit points contains $\left[  \frac{1}{3},\frac{15}{16}\right]  $. In this article, we conjecture an analogous statement in the context of the modified Szpiro conjecture and establish a lower bound for the modified Szpiro ratio. To make this precise, we define the \textit{modified Szpiro ratio} of an elliptic curve $E$ over $\mathbb{Q}$ to be the quantity
\[
\sigma_{m}\!\left(  E\right)  =\frac{\log\!\left(  \max\!\left\{  \left\vert
c_{4}^{3}\right\vert ,c_{6}^{2}\right\}  \right)  }{\log N_{E}}%
\]
where $c_{4}$ and $c_{6}$ are the invariants associated to a global minimal model of $E$, and $N_{E}$ denotes the conductor of $E$. The modified Szpiro ratio is the analog of the quality of an $ABC$ triple. The $ABC$ Conjecture is equivalent to the modified Szpiro conjecture \cite{MR992208}, which states that for each $\epsilon>0$, there are finitely many elliptic curves (up to isomorphism) satisfying $\sigma_{m}\!\left(  E\right) >6+\epsilon$. By \cite{MR1635726}, we have that $\frac{1}{3}$ is a lower bound on the quality of an $ABC$ triple. In this article, we establish that there is a lower bound on $\sigma_{m}\!\left(  E\right)  $ which depends only on the torsion structure of~$E$. Specifically, we prove:

\begin{mainthms}\label{Thm1}\textit{Let $T$ be one of the fifteen torsion subgroups allowed by Mazur's
Torsion Theorem~\cite{MR488287}. If $E/\mathbb{Q}$ is an elliptic curve with $T\hookrightarrow E\!\left(\mathbb{Q}\right)_{\text{tors}}$, then $\sigma_{m}\!\left(  E\right)  >l_{T}$ where~$l_{T}$ is as given below. Moreover, the bounds are sharp.
\[
{\renewcommand{\arraystretch}{1.2}\renewcommand{\arraycolsep}{.132cm}%
\begin{array}
[c]{ccccccccc}\hline
T & C_{1} & C_{2} & C_{3},C_{2}\times C_{2} & C_{4} & C_{5},C_{6},C_{2}\times
C_{4} & C_{7},C_{8},C_{2}\times C_{6} & C_{9},C_{10} & C_{12},C_{2}\times
C_{8}\\\hline
l_{T} & 1 & 1.5 & 2 & 2.4 & 3 & 4 & 4.5 & 4.8\\\hline
\end{array}
}%
\]
}
\end{mainthms}

\noindent The lower bounds are proven in Theorem \ref{mainthm}. Theorem \ref{thmshp} shows that the bounds are sharp.
We note that parts of the proof rely on computer verification, which was done using Mathematica \cite{Mathematica} and SageMath \cite{sagemath}. The Mathematica and SageMath notebooks used for this paper can be found in~\cite{GitHubMSR}.

The bound $\sigma_{m}\!\left(  E\right)  >1$ is a direct consequence from the fact that $\left\vert \Delta\right\vert <\max\!\left\{  \left\vert c_{4}^{3}\right\vert ,c_{6}^{2}\right\}  $, where~$\Delta$ denotes the minimal discriminant of $E$. Consequently, establishing Theorem~\ref{Thm1} is reduced to proving the result for elliptic curves with a non-trivial torsion point. Such curves are parameterizable~\cite{MR0434947}, and in this article we consider families of elliptic curves $E_T$ (see Table~\ref{ta:ETmodel}) which have the property that they parameterize all elliptic curves over $\mathbb{Q}$ that have a non-trivial torsion point (see Proposition~\ref{rationalmodels}). By \cite{2020arXiv200101016B} and \cite{BarRoy}, there are necessary and sufficient conditions on the parameters of the family $E_{T}$ to determine its global minimal model as well as the conductor exponent $f_{p}$ of $E_{T}$ at a given prime. This, in turn, allows us in Section~\ref{enh:condbound} to establish an upper bound $\delta_{T}$ on the conductor $N_{T}$ of $E_{T}$. In section \ref{naiveheight}, we show that $\left\vert \delta_{T}^{l_{T}}\right\vert <\max\!\left\{  \left\vert c_{4}^{3}\right\vert ,c_{6}^{2}\right\}  $. 

In Section \ref{lowerboundsec}, we prove the first part of Theorem~\ref{Thm1} by using these inequalities. We then show that the lower bounds are sharp by considering fifteen parameterized families $F_T$ (see the proof of Theorem~\ref{thmshp}) with the property that their modified Szpiro ratios tend to $l_{T}$. A crucial ingredient in this proof is a result for when a squarefree polynomial
$f\!\left(  x\right)  \in%
\mathbb{Z}
\!\left[  x\right]  $ with content~$1$ has infinitely many squarefree outputs.
The cases when $f\!\left(  x\right)  $ is quadratic or cubic are due to Nagel
\cite{MR3069398} and Erd\"{o}s \cite{MR0056635}, respectively. Booker and
Browning \cite{MR3533307} show that if $f\!\left(
x\right)  $ has the property that it has no fixed prime divisor and each of
the irreducible factors of $f\!\left(  x\right)  $ have degree at most $3$, then there are infinitely many integers $n$ for which $f(n)$ is squarefree. In
Lemma \ref{polysquarefree}, we remove the assumption of no fixed prime divisor.
This allows us to apply \cite{MR3533307} in our setting, thereby showing that
the lower bounds are sharp. We note that for a general $f\!\left(  x\right)
\in%
\mathbb{Z}
\!\left[  x\right]  $ with $\deg f\geq4$, one needs the $ABC$ Conjecture to
show that $f\!\left(  x\right)  $ has infinitely many squarefree outputs
\cite{MR1654759}.

An automatic consequence of Theorem~\ref{Thm1} is the following corollary:

\begin{maincor}
\label{enhMSRtrivial}\textit{Let $E/%
\mathbb{Q}
$ be an elliptic curve such that $\sigma_{m}\!\left(  E\right)  \leq1.5$. Then
$E\!\left(
\mathbb{Q}
\right)  _{\text{tors}}$ is trivial.}
\end{maincor}
\noindent We conclude this article by utilizing Corollary~\ref{enhMSRtrivial} to show that
there are infinitely many $n\in%
\mathbb{Z}
$ for which the rational elliptic curve $y^{2}+y=x^{3}+\left(  3n+1\right)  x$
has trivial torsion subgroup and global Tamagawa number equal to $1$ (see
Corollary \ref{lastcor}). The global Tamagawa number appears as a factor in
the leading term of the $L$-function of an elliptic curve in the Birch and
Swinnerton-Dyer Conjecture \cite{BSD}. Lorenzini \cite{MR2961846} showed that
there are only four rational elliptic curves with a torsion point of order
$n\geq4$ with global Tamagawa number equal to $1$. He also gave an infinite
family of elliptic curves with a $3$-torsion point and global Tamagawa number equal to
$1$. In \cite{BarRoy}, all rational elliptic curves with a non-trivial torsion
point that have global Tamagawa number equal to $1$ were explicitly
classified. In particular, an infinite family of elliptic curves with a
$2$-torsion point and global Tamagawa number $1$ was produced. By Corollary~\ref{enhMSRtrivial}, we now have an infinite family of elliptic curves with trivial
torsion subgroup and global Tamagawa number equal to $1$.

Lastly, we state a conjecture that is the counterpart of \cite[Theorem 4]{MR1635726} in the context of the modified Szpiro ratio. To this end, let $\mathcal{E}_{\mathbb{Q}}$ denote the collection of $\mathbb{Q}$-isomorphism classes of elliptic curves and let $\left[  E\right]  _{\mathbb{Q}}$ denote the $\mathbb{Q}$-isomorphism class of $E$. Then we can extend $\sigma_{m}$ to be defined on $\mathcal{E}_{\mathbb{Q}}$ by taking $\sigma_{m}\!\left(  \left[  E\right]  _{\mathbb{Q}}\right)  =\sigma_{m}\!\left(  E\right)  $. Since $\sigma_{m}$ depends on a global minimal model of $E$, this extension is well-defined. With this terminology, Theorem~\ref{Thm1} and \cite[Theorem 4]{MR1635726}, lead us to conjecture the following:

\begin{maincon}\textit{Let $T$ be one of the fifteen torsion subgroups allowed by
Mazur's Torsion Theorem \cite{MR488287}. If the $ABC$ Conjecture holds, then
$\left[  l_{T},6\right]  $ is the set of limit points of
\[
R_T=\left\{  \sigma_{m}\!\left(  \left[  E\right]  _{\mathbb{Q}}\right)  \mid\left[  E\right]  _{\mathbb{Q}}\in\mathcal{E}_{\mathbb{Q}},\ E\!\left(\mathbb{Q}\right)  _{\text{tors}}\cong T\right\}  .
\]}
\end{maincon}

\noindent The proof of Theorem~\ref{Thm1} shows that $l_{T}$ is a limit point of $R_T$. It has also been established that there are infinitely many non-isomorphic elliptic curves $E$ such that $\sigma_m(E)>6$ \cite{MR1065152,MR2931308,barrios2019constructive,Bar1}. In \cite{Bar1}, the author showed that for each of the fifteen torsion subgroups $T$ allowed by Mazur's Torsion Theorem, there are infinitely many non-isomorphic elliptic curves $E$ such that $\sigma_m(E)>6$ with $E(\mathbb{Q})_\text{tors}\cong T$. In particular, if the $ABC$ Conjecture holds, we have that $6$ is a limit point of $R_T$.

\subsection{Notation and terminology}

We start by recalling some facts about elliptic curves over~$\mathbb{Q}$. For
further details, see \cite{MR1312368,MR2514094}. An elliptic curve $E/\mathbb{Q}$ is given by an (affine) Weierstrass model
\begin{equation}
E:y^{2}+a_{1}xy+a_{3}y=x^{3}+a_{2}x^{2}+a_{4}x+a_{6} \label{ch:inintroweier}%
\end{equation}
with each $a_{j}\in%
\mathbb{Q}
$. From (\ref{ch:inintroweier}), we define%
\begin{equation}%
\begin{array}
[c]{l}%
c_{4}=a_{1}^{4}+8a_{1}^{2}a_{2}-24a_{1}a_{3}+16a_{2}^{2}-48a_{4},\\
c_{6}=-\left(  a_{1}^{2}+4a_{2}\right)  ^{3}+36\left(  a_{1}^{2}%
+4a_{2}\right)  \left(  2a_{4}+a_{1}a_{3}\right)  -216\left(  a_{3}^{2}%
+4a_{6}\right)  .
\end{array}
\label{basicformulas}%
\end{equation}
The quantities $c_{4}$ and $c_{6}$ are the \textit{invariants associated to the Weierstrass model} of $E$. The discriminant of $E$ is then defined as $\Delta_{E}=\frac{c_{4}^{3}-c_{6}^{2}}{1728}$. Each elliptic curve $E/\mathbb{Q}$ is $\mathbb{Q}$-isomorphic to a \textit{global minimal model} $E^{\text{min}}$ where $E^{\text{min}}$ is given by a Weierstrass model of the form (\ref{ch:inintroweier}) such that each $a_{j}\in\mathbb{Z}$ and its discriminant $\Delta_{E}^{\text{min}}$ satisfies
\[
\Delta_{E}^{\text{min}}=\min\!\left\{  \left\vert \Delta_{F}\right\vert \mid
F\text{ is }%
\mathbb{Q}
\text{-isomorphic to }E\text{ and }F\text{ is given by (\ref{ch:inintroweier}%
) with }a_{j}\in%
\mathbb{Z}
\right\}  .
\]
In what follows, let $c_{4}$ and $c_{6}$ be the invariants associated to $E^{\text{min}}$. We call $\max\!\left\{  \left\vert c_{4}^{3}\right\vert ,c_{6}^{2}\right\}  $ the \textit{naive height} of $E$ and note that this is slightly different than what appears in the literature~\cite{MR1745599}. Namely, we have dropped the logarithm from the standard definition. If a prime~$p$ divides $\gcd\!\left(c_{4},\Delta_{E}^{\text{min}}\right)  $, then $E$ is said to have \textit{additive reduction at }$p$. If this is not the case, then $E$ is said to be \textit{semistable} \textit{at }$p$\textit{. If }$E$ is semistable at each prime, then we say $E$ is \textit{semistable}. The conductor $N_{E}$ of $E$ is given by
\vspace{-.4em}
\[
N_{E}=\prod_{p|\Delta_{E}^{\text{min}}}p^{f_{p}}%
\]
where $f_{p}$ is a positive integer that is computed via Ogg's formula
\cite{MR207694}. Namely, $f_{p}=v_{p}\!\left(  \Delta_{E}^{\text{min}}\right)
-m_{p}+1$ where $m_{p}$ is the number of components (without counting multiplicities), defined over
$\overline{\mathbb{F}}_{p}$, of the special fiber of the regular reduced
proper minimal model $\mathcal{C}_{p}$ of $E$ over $\mathbb{Z}_{p}$ \cite{Neron1964}. We note that the \textit{local Tamagawa number
$c_{p}$ at $p$} is the number of components of the special fiber of
$\mathcal{C}_{p}$ that have multiplicity $1$ and are defined over
$\mathbb{F}_{p}$. The \textit{global Tamagawa number} $c$ is defined as
$c=\prod_{p}c_{p}$. We note that for each prime $p$, Tate's Algorithm
\cite{MR0393039} provides instructions to compute $m_{p}$ and $c_{p}$. In
fact, $f_{p}\leq 1$ if $E$ is semistable at $p$, and in general, we have that $f_{2}\leq8$, $f_{3}\leq5$, and $f_{p}\leq2$ for~$p\geq5$.

The group of rational points $E\!\left(\mathbb{Q}\right)  $ of an elliptic curve $E/\mathbb{Q}$ is a finitely generated abelian group and by Mazur's Torsion Theorem, there
are precisely fifteen possibilities for the torsion subgroup $E\!\left(\mathbb{Q}\right)  _{\text{tors}}$ of $E\!\left(\mathbb{Q}\right)  $.

\begin{theorem}
[Mazur's Torsion Theorem \cite{MR488287}]\label{MazurTorThm}Let $E/\mathbb{Q}$ be an elliptic curve and let $C_{n}$ denote the cyclic group of order $n$. Then
\[
E\!\left(  \mathbb{Q}\right)  _{\text{tors}}\cong\left\{
\begin{array}
[c]{ll}%
C_{n} & \text{for }n=1,2,\ldots,10,12,\\
C_{2}\times C_{2n} & \text{for }n=1,2,3,4.
\end{array}
\right.
\]

\end{theorem}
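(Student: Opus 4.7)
My plan is to translate the classification into a question about rational points on modular curves. For each integer $N\geq 1$, the modular curve $X_1(N)/\mathbb{Q}$ parametrizes isomorphism classes of pairs $(E,P)$ where $E$ is an elliptic curve and $P$ is a point of exact order $N$, with the cusps corresponding to degenerate specializations. Analogously, there is a modular curve $X(2,2N)$ parametrizing elliptic curves equipped with a $C_2\times C_{2N}$ subgroup. A torsion group $T$ embeds in $E(\mathbb{Q})_{\text{tors}}$ for some $E/\mathbb{Q}$ precisely when the corresponding modular curve has a non-cuspidal $\mathbb{Q}$-rational point, so the theorem reduces to two opposing claims: (i) each of the fifteen listed groups is realized by some $E/\mathbb{Q}$; and (ii) no other cyclic or $C_2\times C_{2N}$ group is.

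Direction (i) is essentially classical. For each listed $T$, the associated modular curve has genus $0$ and a rational point, hence admits a universal parameterization; explicit Weierstrass models---in the spirit of Kubert's tables---then exhibit infinitely many such $E$. Direction (ii) is the deep content. A genus computation shows that each excluded modular curve has genus $\geq 1$, and what must be proved is not merely finiteness of rational points but the stronger statement that \emph{every} rational point is a cusp.

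For this I would follow Mazur's Eisenstein-ideal method. One embeds $X_1(N)$ (or $X_0(N)$ via an appropriate isogeny) into its Jacobian $J$, and forms the \emph{Eisenstein quotient} $\tilde{J}=J/\mathfrak{I}J$, where $\mathfrak{I}\subset\mathbb{T}$ is the Eisenstein ideal in the Hecke algebra. The crucial input is that $\tilde{J}(\mathbb{Q})$ is finite and generated by cuspidal divisor classes. Combined with the formal-immersion method---which forces any putative non-cuspidal rational point to collapse onto a cusp modulo a carefully chosen small prime, and hence to coincide with that cusp globally---this yields the desired vanishing. The main obstacle is to verify non-triviality of $\tilde{J}$ and to control the cuspidal subgroup uniformly over the relevant levels; this is the technical heart of Mazur's original work. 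For the sporadic remaining composite $N$ (such as $N=13,14,15,16,18,21,25,27$ and the composite $C_2\times C_{2N}$ levels), case-by-case descents on explicit affine models---often via auxiliary isogenies to $X_0(N)$---complete the argument. The plan is conceptually uniform, but its execution famously fills a monograph.
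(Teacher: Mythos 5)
The paper does not prove this statement; it is quoted as Mazur's Torsion Theorem and cited to \cite{MR488287}, so there is no ``paper's own proof'' against which to compare yours step by step. What you have written is a compressed but essentially faithful bird's-eye sketch of how the theorem is actually proved in the literature: reduce to the non-existence of non-cuspidal rational points on $X_1(N)$ (and the mixed-level curves for $C_2\times C_{2N}$), show genus $0$ with a rational point for the fifteen realized groups, and for the excluded levels combine the Eisenstein-quotient finiteness with a formal-immersion argument at a small prime to force any rational point to be a cusp. That is the correct architecture.

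Two cautions are worth flagging. First, the hard core of Mazur's argument is the \emph{prime} case: showing that no rational point of prime order $p\geq 11$ with $p\neq 13$ (and then separately $p=13$) exists; the composite exclusions ($N=14,15,16,18,21,25,\dots$) largely reduce to the prime case via isogenies together with earlier work of Kubert, Ogg, and others, rather than requiring a fresh Eisenstein-ideal run at each level. Your listing lumps these together, which slightly misrepresents where the difficulty lies. Second, your final sentence is the honest admission: this is a program, not a proof. Each of ``$\tilde{J}(\mathbb{Q})$ is finite and generated by cuspidal classes,'' ``the formal immersion forces collapse onto a cusp,'' and ``the genus-$0$ curves all have rational points'' requires substantial verification that your sketch does not supply. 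Since the paper itself treats the theorem as an external input, citing \cite{MR488287} (and, for a modern exposition, one could add Rebolledo--Wuthrich or the treatment in Silverman's second volume) is both appropriate and what the author does; a proof sketch at this level of detail would not be suitable for inclusion, but it is a sound summary of the known argument.
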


Now let $E_{T}$ be the parameterized family of elliptic curves given in Table~\ref{ta:ETmodel} for the listed~$T$. These families parameterize all rational elliptic curves with a non-trivial torsion point as made precise by the following proposition:

\begin{proposition}
[{\cite[Proposition 4.3]{2020arXiv200101016B}}]\label{rationalmodels}Let $E/%
\mathbb{Q}
$ be an elliptic curve and suppose further that $T\hookrightarrow E\!\left(
\mathbb{Q}
\right)  _{\text{tors}}$ where $T$ is one of the fourteen non-trivial torsion
subgroups allowed by Theorem~\ref{MazurTorThm}. Then there are integers
$a,b,d$ such that

$\left(  1\right)  $ If $T\neq C_{2},C_{3},C_{2}\times C_{2}$, then $E$ is
$\mathbb{Q}$-isomorphic to $E_{T}\!\left(  a,b\right)  $ with $\gcd\!\left(
a,b\right)  =1$ and $a$ is positive.

$\left(  2\right)  $ If $T=C_{2}$ and $C_{2}\times C_{2}\not \hookrightarrow
E(\mathbb{Q})$, then $E$ is $\mathbb{Q}$-isomorphic to $E_{T}\!\left(
a,b,d\right)  $ with $d\neq1,b\neq0$ such that $d$ and $\gcd\!\left(
a,b\right)  $ are positive squarefree integers.

$\left(  3\right)  $ If $T=C_{3}$ and the $j$-invariant of $E$ is not $0$,
then $E$ is $\mathbb{Q}$-isomorphic to $E_{T}\!\left(  a,b\right)  $ with
$\gcd\!\left(  a,b\right)  =1$ and $a$ is positive.

$\left(  4\right)  $ If $T=C_{3}$ and the $j$-invariant of $E$ is $0$, then
$E$ is either $\mathbb{Q}$-isomorphic to $E_{T}\!\left(  24,1\right)  $ or to
the curve $E_{C_{3}^{0}}\!\left(  a\right)  :y^{2}+ay=x^{3}$ for some positive
cubefree integer $a$.

$\left(  5\right)  $ If $T=C_{2}\times C_{2}$, then $E$ is $\mathbb{Q}%
$-isomorphic to $E_{T}\!\left(  a,b,d\right)  $ with $\gcd\!\left(
a,b\right)  =1$, $d$ positive squarefree, and $a$ is even.
\end{proposition}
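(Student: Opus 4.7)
The plan is to combine Kubert's parameterization of elliptic curves with prescribed rational torsion with an explicit normalization of the resulting rational parameters to integers. For a torsion subgroup $T$ of order $N \geq 4$ with $T \neq C_2 \times C_2$, every elliptic curve $E/\mathbb{Q}$ with $T \hookrightarrow E(\mathbb{Q})_{\text{tors}}$ arises as a specialization of Tate's normal form in a single rational parameter $t$. First I would write $t = a/b$ in lowest terms and apply an admissible change of variable $(x,y) \mapsto (u^{-2}x, u^{-3}y)$ with $u$ an appropriate power of $b$ to clear denominators, producing an integral model which by direct comparison coincides with $E_T(a,b)$; choosing the sign so that $a > 0$ gives case (1).

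For $T = C_3$, Tate's normal form $y^2 + a_1 xy + a_3 y = x^3$ has $(0,0)$ as a rational $3$-torsion point and depends on two rational parameters. After clearing denominators one obtains $E_{C_3}(a,b)$ with $\gcd(a,b) = 1$ and $a > 0$ whenever $j(E) \neq 0$, establishing case (3). The locus $j = 0$ requires separate treatment, because $y^2 + y = x^3$ has extra automorphisms over $\overline{\mathbb{Q}}$ (complex multiplication by $\mathbb{Z}[\zeta_3]$), so its $\mathbb{Q}$-twists are classified by a richer group than $\mathbb{Q}^\times/(\mathbb{Q}^\times)^2$. Among those twists retaining a rational $3$-torsion point, one shows directly that the $\mathbb{Q}$-isomorphism classes are represented either by $E_{C_3}(24,1)$ or by $E_{C_3^{0}}(a)\colon y^2 + ay = x^3$ with $a$ positive and cubefree, using that $y \mapsto -y$ and $(x,y) \mapsto (u^2 x, u^3 y)$ scale $a$ by a sign and by a cube, respectively; this yields case (4).

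For $T = C_2$ (without full $2$-torsion) and $T = C_2 \times C_2$, any elliptic curve admitting a rational $2$-torsion point has a model $y^2 = x^3 + Ax^2 + Bx$ with $A,B \in \mathbb{Q}$ and $B(A^2 - 4B) \neq 0$. The quadratic twist by $d \in \mathbb{Q}^\times$ sends this curve to $y^2 = x^3 + dAx^2 + d^2 Bx$, so writing $(A,B) = (a/m, b/m^2)$ with $\gcd(a,b)$ squarefree and absorbing the remaining square into $d$ yields a triple $(a,b,d)$ with both $d$ and $\gcd(a,b)$ squarefree, giving case (2); the stipulation $d \neq 1$ excludes the untwisted model already handled by the Tate form, and $b \neq 0$ reflects nondegeneracy of the cubic. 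For case (5), the requirement $C_2 \times C_2 \hookrightarrow E(\mathbb{Q})$ is equivalent to $x^3 + Ax^2 + Bx$ splitting completely over $\mathbb{Q}$; working out the resulting arithmetic conditions and normalizing $d$ to be squarefree forces $a$ to be even.

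The main obstacle is uniqueness of the normalization: one must verify that distinct triples $(a,b,d)$ satisfying the stated coprimality and squarefreeness conditions yield non-$\mathbb{Q}$-isomorphic curves. I would handle this by tracking how admissible changes of variable $(x,y) \mapsto (u^2 x + r, u^3 y + u^2 s x + t)$ act on the parameters and invoking the rigidity of the squarefree part of $\gcd(a,b)$ and of $d$ under such changes, together with a case analysis of the exceptional twists appearing in the $j = 0$ and $j = 1728$ loci (the latter of which cannot arise under the torsion hypotheses at hand).
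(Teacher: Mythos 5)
The paper does not actually prove this proposition: it is stated with a citation to \cite[Proposition~4.3]{2020arXiv200101016B}, and the proof lives in that referenced work, not here. So there is no in-paper argument for you to mirror. That said, your sketch follows the standard route to such a classification (Tate/Kubert normal forms specialized to each $T$, clearing denominators in the rational parameter to reach the integral models of Table~\ref{ta:ETmodel}, quadratic twists in the $C_{2}$ and $C_{2}\times C_{2}$ cases, and a separate treatment of the $j=0$ locus for $C_{3}$ where the $(E,P)$ pair has extra automorphisms), and that route is essentially what one would expect the cited proof to do.

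Two substantive criticisms of the sketch itself. First, you identify as ``the main obstacle'' the verification that distinct normalized triples $(a,b,d)$ give non-isomorphic curves, but the proposition makes no such uniqueness claim: it asserts only the \emph{existence} of integers $a,b,d$ with the stated normalizations such that $E$ is $\mathbb{Q}$-isomorphic to the corresponding model. That whole paragraph is devoted to something you are not being asked to prove, while the steps you actually need (e.g.\ that in the $C_{2}\times C_{2}$ case one can always translate the origin among the three rational $2$-torsion points to make $a$ even, or that after clearing denominators one really does land on the models in Table~\ref{ta:ETmodel} rather than a scalar multiple that would spoil $\gcd(a,b)=1$) are the parts that would need to be written out carefully. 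Second, the parenthetical that $j=1728$ ``cannot arise under the torsion hypotheses at hand'' is false in the cases where you invoke it: for example $y^{2}=x^{3}-x$ has $j=1728$ and full rational $2$-torsion, so $C_{2}\times C_{2}\hookrightarrow E(\mathbb{Q})$, and $y^{2}=x^{3}+4x$ has $j=1728$ with a rational point of order $4$. The $j=1728$ locus does not need to be excluded; rather it simply produces particular values of the parameters already covered by the families, so the appeal to it as part of a case analysis you do not need is doubly spurious.
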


\vspace{-0.85em} {\renewcommand*{\arraystretch}{1.18}
\begin{longtable}{C{0.6in}C{1in}C{1.5in}C{1.5in}C{0.5in}}
\caption[Weierstrass Model of $E_{T}$]{The Weierstrass Model $E_{T}:y^{2}+a_{1}xy+a_{3}y=x^{3}+a_{2}x^{2}+a_{4}x$}\\
\hline
$T$ & $a_{1}$ & $a_{2}$ & $a_{3}$ & $a_{4}$ \\
\hline
\endfirsthead
\caption[]{\emph{continued}}\\
\hline
$T$ & $a_{1}$ & $a_{2}$ & $a_{3}$ & $a_{4}$\\
\hline
\endhead
\hline
\multicolumn{2}{r}{\emph{continued on next page}}
\endfoot
\hline
\endlastfoot
$C_{2}$ & $0$ & $2a$ & $0$ & $a^{2}-b^{2}d$ \\\hline
$C_{3}^{0}$ & $0$ & $0$ & $a$ & $0$ \\\hline
$C_{3}$ & $a$ & $0$ & $a^{2}b$ & $0$ \\\hline
$C_{4}$ & $a$ & $-ab$ & $-a^{2}b$ & $0$ \\\hline
$C_{5}$ & $a-b$ & $-ab$ & $-a^{2}b$ & $0$ \\\hline
$C_{6}$ & $a-b$ & $-ab-b^{2}$ & $-a^{2}b-ab^{2}$ & $0$\\\hline
$C_{7}$ & $a^{2}+ab-b^{2}$ & $a^{2}b^{2}-ab^{3}$ & $a^{4}b^{2}-a^{3}b^{3}$ & $0$ \\\hline
$C_{8}$ & $-a^{2}+4ab-2b^{2}$ & $-a^{2}b^{2}+3ab^{3}-2b^{4}$ & $-a^{3}b^{3}+3a^{2}
b^{4}-2ab^{5}$ & $0$ \\\hline
$C_{9}$ & $a^{3}+ab^{2}-b^{3}$ & $
a^{4}b^{2}-2a^{3}b^{3}+
2a^{2}b^{4}-ab^{5}
$ & $a^{3}\cdot a_{2}$
& $0$ \\\hline
$C_{10}$ &$
a^{3}-2a^{2}b-
2ab^{2}+2b^{3}
$ & $-a^{3}b^{3}+3a^{2}b^{4}-2ab^{5}$ & $(a^{3}-3a^{2}b+ab^{2})\cdot a_{2}$ & $0$\\\hline
$C_{12}$ & $
-a^{4}+2a^{3}b+2a^{2}b^{2}-
8ab^{3}+6b^{4}
$ & $b(a-2b)(a-b)^{2}(a^{2}-3ab+3b^{2})(a^{2}-2ab+2b^{2})
$ & $a(b-a)^3 \cdot a_{2} $& $0$ \\\hline
$C_{2}\times C_{2}$ & $0$ & $ad+bd$ & $0$ & $abd^{2}$ \\\hline
$C_{2}\times C_{4}$ & $a$ & $-ab-4b^{2}$ & $-a^{2}b-4ab^{2}$ & $0$ \\\hline
$C_{2}\times C_{6}$ & $-19a^{2}+2ab+b^{2}$ & $
-10a^{4}+22a^{3}b-
14a^{2}b^{2}+2ab^{3}
$ & $
90a^{6}-198a^{5}b+116a^{4}b^{2}+
4a^{3}b^{3}-14a^{2}b^{4}+2ab^{5}
$ & $0$ \\\hline
$C_{2}\times C_{8}$ & $
-a^{4}-8a^{3}b-
24a^{2}b^{2}+64b^{4}$ & $-4ab^{2}(a+2b)(a+4b)^{2}(a^{2}+4ab+8b^{2}) $ & $ -2b(a+4b)(a^{2}-8b^{2}) \cdot a_{2}
$ & $0$
\label{ta:ETmodel}	
\end{longtable}}

\section{Upper Bound on the Conductor of
\texorpdfstring{$E_{T}$}{}\label{enh:condbound}}

Let $E_{T}$ be as defined in Table~\ref{ta:ETmodel} for $T\neq C_{3}^{0}$. Throughout this section, we let
\[
N_{T}=\left\{
\begin{array}
[c]{ll}%
N_{T}\!\left(  a,b,d\right)  & \text{if }T=C_{2},C_{2}\times C_{2},\\
N_{T}\!\left(  a,b\right)  & \text{if }T\not =C_{2},C_{2}\times C_{2}%
\end{array}
\right.
\]
be the conductor of $E_{T}$. In this section, we establish an upper bound on
$N_{T}$ which will allow us to deduce our main result. To this end, let%
\[
\gamma_{T}=\left\{
\begin{array}
[c]{ll}%
\gamma_{T}\!\left(  a,b,d\right)  & \text{if }T=C_{2},C_{2}\times C_{2},\\
\gamma_{T}\!\left(  a,b\right)  & \text{if }T\not =C_{2},C_{2}\times C_{2}.
\end{array}
\right.
\]
be as defined in \cite[Table 6]{2020arXiv200101016B}. Now write%
\begin{equation}
a=\left\{
\begin{array}
[c]{ll}%
c^{3}d^{2}e\text{ with }\gcd\!\left(  d,e\right)  =1\ \text{and }de\text{
squarefree} & \text{if }T=C_{3},\\
c^{2}d\text{ with }d\text{ squarefree} & \text{if }T=C_{4}.
\end{array}
\right.  \label{valueofaC34}%
\end{equation}
By \cite[Theorem 4.4]{2020arXiv200101016B}, the minimal discriminant of
$E_{T}$ is $u_{T}^{-12}\gamma_{T}$ where%
\[
{\renewcommand{\arraystretch}{1.2}\renewcommand{\arraycolsep}{.147cm}%
\begin{array}
[c]{cccccccc}\hline
T & C_{5},C_{7},C_{9} & C_{6},C_{8},C_{10},C_{12},C_{2}\times C_{2} &
C_{2},C_{2}\times C_{4} & C_{2}\times C_{6} & C_{2}\times C_{8} & C_{3} &
C_{4}\\\hline
u_{T} & 1 & 1\ \text{or }2 & 1,2,\ \text{or\ }4 & 1,4,\ \text{or\ }16 &
1,16,\ \text{or\ }64 & c^{2}d & c\ \text{or\ }2c\\\hline
\end{array}
}%
\]
In fact, \cite[Theorem 4.4]{2020arXiv200101016B} provides necessary and
sufficient conditions on the parameters of~$E_{T}$ to determine $u_{T}$. For
each $u_{T},$ let
\begin{equation}
\delta_{T,u_{T}}=\left\{
\begin{array}
[c]{ll}%
\delta_{T,u_{T}}\!\left(  a,b,d\right)  & \text{if }T=C_{2},C_{2}\times
C_{2},\\
\delta_{T,u_{T}}\!\left(  c,d,e,b\right)  & \text{if }T=C_{3},\\
\delta_{T,u_{T}}\!\left(  c,d,b\right)  & \text{if }T=C_{4},\\
\delta_{T,u_{T}}\!\left(  a,b\right)  & \text{if }T\neq C_{2},C_{3}%
,C_{4},C_{2}\times C_{2},
\end{array}
\right.  \label{delutdef}%
\end{equation}
be as given in Table \ref{ta:delT}. 
These expressions are also found in \cite[definitions.sage]{GitHubMSR}.
The main result of this section shows that if the minimal discriminant
of $E_{T}$ is $u_{T}^{-12}\gamma_{T}$, then
$N_{T}\leq\left\vert \delta_{T,u_{T}}\right\vert $. \vspace{-0.5em}
{\renewcommand*{\arraystretch}{1.2}\begin{longtable}{cC{2.8in}ccc}
\caption{The Polynomials $\delta_{T,u_{T}}$}
		\label{ta:delT}	\\
\hline
$T$ & $\delta_{T}$ & $u_{T}$, $\delta_{T,u_{T}}$ & $u_{T}$, $\delta_{T,u_{T}}$ & $u_{T}$, $\delta_{T,u_{T}}$\\
\hline
\endfirsthead
\caption[]{\emph{continued}}\\
\hline
$T$ & $\delta_{T}$ & $u_{T}$, $\delta_{T,u_{T}}$ & $u_{T}$, $\delta_{T,u_{T}}$ & $u_{T}$, $\delta_{T,u_{T}}$\\
\hline
\endhead
\hline
\multicolumn{3}{r}{\emph{continued on next page}}
\endfoot
\hline
\endlastfoot
	
$C_{2}$ & $b^{2}d(  b^{2}d-a^{2})  $ & $1$, $256\delta_{T}$ & $2$, $4\delta_{T}$ & $4$, $\frac{1}{64}\delta_{T}$\\\hline
$C_{3}$ & $3bd^{2}e^{4}(  c^{3}d^{2}e-27b)  $ & $c^{2}d$, $\delta_{T}$ & &\\\hline
$C_{4}$ & $bcd^{3}(  16b+c^{2}d)  $ & $c$, $2\delta_{T}$ & $2c$, $\frac{1}{16}\delta_{T}$ &  \\\hline
$C_{5}$ & $ab(  a^{2}+11ab-b^{2})  $ & $1$, $\delta_{T}$ & & \\\hline
$C_{6}$ & $ab(  a+b)  (  a+9b)  $ & $1$, $\delta_{T}$ & $2$, $\frac{1}{8}\delta_{T}$ &\\\hline
$C_{7}$ & $ab(  a-b)  (  a^{3}+5a^{2}b-8ab^{2}
+b^{3})  $ & $1$, $\delta_{T}$ & &\\\hline
$C_{8}$ & $ab(  a-2b)  (  a-b)  (
a^{2}-8ab+8b^{2})  $& $1$, $\delta_{T}$ & $2$, $\frac{1}{8}\delta_{T}$ &\\\hline
$C_{9}$ & $ab(  a-b)  (  a^{2}-ab+b^{2})  (
a^{3}+3a^{2}b-6ab^{2}+b^{3})  $ & $1$, $\delta_{T}$ & &\\\hline
$C_{10}$ & $ab(  a-2b)  (  a-b)  (
a^{2}+2ab-4b^{2})  (  a^{2}-3ab+b^{2})  $ & $1$, $\delta_{T}$ & $2$, $\frac{1}{4}\delta_{T}$ &\\\hline
$C_{12}$ & $ab(  a-2b)  (  a-b)  (
a^{2}-6ab+6b^{2})  (  a^{2}-2ab+2b^{2})  (
a^{2}-3ab+3b^{2})  $ & $1$, $\delta_{T}$ & $2$, $\frac{1}{8}\delta_{T}$ & \\\hline
$C_{2}\times C_{2}$ & $abd^{3}(  a-b)  $ & $1$, $64\delta_{T}$ & $2$, $\delta_{T}$ &\\\hline
$C_{2}\times C_{4}$ & $ab(  a+4b)  (  a+8b)
$ & $1$, $8\delta_{T}$ & $2$, $\frac{1}{2}\delta_{T}$  & $4$, $\frac{1}{32}\delta_{T}$\\\hline
$C_{2}\times C_{6}$ & $a(  a-b)  (  3a-b)  (
5a-b)  (  9a-b)  (  3a+b)   $ & $1$, $\delta_{T}$ & $4$, $\frac{1}{8}\delta_{T}$ & $16$, $\frac{1}{512}\delta_{T}$\\\hline
$C_{2}\times C_{8}$ & $ab(  a+2b)  (  a+4b)
(  a^{2}-8b^{2})  (  a^{2}+8ab+8b^{2})  (
a^{2}+4ab+8b^{2})  $ & $1$, $2\delta_{T}$ & $16$, $\frac{1}{128}\delta_{T}$ & $64$, $\frac{1}{4096}\delta_{T}$
\end{longtable}}

\begin{lemma}
\label{UpperBoundLemma}If $p$ is a prime that divides the minimal discriminant
$u_{T}^{-12}\gamma_{T}$ of $E_{T}$, then~$p$ divides $\delta_{T,u_{T}}$. In
particular, if $E_{T}$ is semistable at $p$, then $v_{p}\!\left(
N_{T}\right)  \leq v_{p}\!\left(  \delta_{T,u_{T}}\right)  $.
\end{lemma}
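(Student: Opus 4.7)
The plan is to reduce the lemma to a finite case-by-case comparison of polynomials, one case per torsion subgroup $T\neq C_{3}^{0}$, by using the explicit formula for $\gamma_{T}$ recorded in \cite[Table~6]{2020arXiv200101016B} together with the definition of $\delta_{T,u_{T}}$ in Table~\ref{ta:delT}. The second assertion is an almost immediate corollary of the first, so the heart of the argument is the radical containment for the minimal discriminant.

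For the first assertion, the natural approach is to show that $\mathrm{rad}(u_{T}^{-12}\gamma_{T}) \mid \mathrm{rad}(\delta_{T,u_{T}})$ as elements of $\mathbb{Z}$, evaluated at any admissible choice of parameters. In every row of Table~\ref{ta:delT} the polynomial $\delta_{T,u_{T}}$ is, up to an explicit rational constant that is a product of powers of $2$ (and once of $3$, in the $C_{3}$ row), the polynomial $\delta_{T}$, whose irreducible factors in $\mathbb{Z}[a,b]$ (or $\mathbb{Z}[a,b,d]$ or $\mathbb{Z}[c,d,e,b]$) are exactly those appearing in the factorization of $\gamma_{T}$ given in the cited table. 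Thus the verification splits into two pieces: (i) checking that the primitive part of $\gamma_{T}$ has the same irreducible factors as $\delta_{T}$, which is just an inspection of the two tables; and (ii) reconciling the rational constant: the content of $u_{T}^{-12}\gamma_{T}$ at $2$ and $3$ is at most the $2$- and $3$-content of $\delta_{T,u_{T}}$. For example, for $T=C_{2}$ with $u_{T}=1$ one has $\gamma_{C_{2}}=2^{8}\,b^{2}d(b^{2}d-a^{2})$ and $\delta_{C_{2},1}=256\,\delta_{C_{2}}$, so the equality of radicals is literal; in the cases $u_{T}=2,4$ the denominator $u_{T}^{-12}$ cancels only powers of $2$, which are accounted for by the corresponding constant in Table~\ref{ta:delT}. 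The same bookkeeping works for $C_{2}\times C_{2}$, $C_{6},C_{8},C_{10},C_{12}$, $C_{2}\times C_{4}$, $C_{2}\times C_{6}$, $C_{2}\times C_{8}$; in the singleton $u_{T}$ cases $C_{5},C_{7},C_{9}$ the polynomials $\gamma_{T}$ and $\delta_{T}$ agree up to sign; and the $C_{3}$ and $C_{4}$ rows require the further substitution $a=c^{3}d^{2}e$ or $a=c^{2}d$ from \eqref{valueofaC34}, after which the explicit factor $c^{2}d$ (resp.\ $c$) is absorbed in $u_{T}$.

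For the second assertion, suppose $E_{T}$ is semistable at $p$, so by the definition of semistability $p\nmid\gcd(c_{4},u_{T}^{-12}\gamma_{T})$. Tate's Algorithm \cite{MR0393039} then gives $f_{p}=1$, and since $p$ divides the minimal discriminant it appears in $N_{T}$ with $v_{p}(N_{T})=1$. By the first part, $p\mid\delta_{T,u_{T}}$, so $v_{p}(\delta_{T,u_{T}})\geq 1 = v_{p}(N_{T})$, as required.

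The main obstacle is just the bookkeeping for the normalizing constant of $\delta_{T,u_{T}}$ in the rows where $u_{T}$ can take multiple values; this is where one has to check that the power of $2$ (or $3$, for $C_{3}$) pulled out of $\gamma_{T}$ by $u_{T}^{-12}$ is never larger than the power already present in the constant prefactor of $\delta_{T,u_{T}}$. Apart from this, the proof is a routine verification that does not use any new input beyond Table~\ref{ta:delT}, \cite[Table~6]{2020arXiv200101016B}, and the semistability criterion recalled in the introduction.
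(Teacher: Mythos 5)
Your high-level strategy — handle odd primes by matching irreducible factors of $\gamma_T$ against those of $\delta_T$, then deal separately with the primes $2$ (and $3$ for $C_3$, and with the $C_3,C_4$ substitutions from~(\ref{valueofaC34})) — is the same skeleton as the paper's proof, and your treatment of odd primes and of the second assertion is fine. But the part you dismiss as ``bookkeeping for the normalizing constant'' is in fact the nontrivial core of the argument, and the way you frame it would not close the gap.

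The issue is that for $p=2$ the verification is \emph{not} a comparison of constant prefactors between $u_T^{-12}\gamma_T$ and $\delta_{T,u_T}$. When $u_T\neq 1$, the prefactor of $\delta_{T,u_T}$ in Table~\ref{ta:delT} has a negative power of $2$ (e.g.\ $\frac{1}{512}\delta_T$ for $C_2\times C_6$ with $u_T=16$, or $\frac{1}{64}\delta_T$ for $C_2$ with $u_T=4$), while $\delta_T$ itself has content $1$ as a polynomial in $a,b$. So viewed purely as polynomial data, $\delta_{T,u_T}$ has negative $2$-content, and no comparison of constants can show it is an even integer. What actually makes the lemma true is that the necessary-and-sufficient conditions from \cite[Theorem 4.4]{2020arXiv200101016B} that force $u_T$ to take a value $>1$ are $2$-adic congruence conditions on $a,b$ (and $d$) — e.g.\ for $C_2\times C_6$ with $u_T=16$ one has $v_2(a+b)=1$, from which one deduces $v_2\!\left((a-b)(5a-b)(9a-b)\right)\geq 7$ and $v_2\!\left((3a-b)(3a+b)\right)\geq 3$, hence $v_2(\delta_T)\geq 10 > 9$. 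This input — the specific conditions tied to each value of $u_T$, and the resulting lower bounds on $v_2(\delta_T)$ — is essential, and nothing in your proposal indicates where it would come from. The paper's proof devotes Cases 1 through 7 to exactly this verification, one $T$ at a time, and that case analysis cannot be replaced by the content/constant comparison you sketch. Until you invoke those congruence conditions and derive the corresponding lower bound on $v_2(\delta_T)$ for each $(T,u_T)$ with $u_T\neq 1$, the $p=2$ case of the first assertion is unproved.
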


\begin{proof}
Throughout this proof, we implicitly assume the necessary and sufficient conditions on the parameters of $E_{T}$ found in \cite[Theorem 4.4]{2020arXiv200101016B} to determine the minimal discriminant $u_{T}^{-12}\gamma_{T}$ of $E_{T}$.

First, suppose $T=C_{3}$ so that $u_{T}=c^{2}d$. Recall that $a=c^{3}d^{2}e$ for $d$ and $e$ relatively prime squarefree integers. In particular, the minimal discriminant of $E_{T}$ is $u_{T}^{-12}\gamma_{T}=b^{3}d^{4}e^{8}\left(  c^{3}d^{2}e-27b\right)  $. By inspection, the lemma holds in this case. Next, suppose $T=C_{4}$. Then $u_{T}$ is either $c$ or $2c$. Recall that $a=c^{2}d$ for $d$ a squarefree integer. If $u_{T}=c$, then the minimal discriminant of $E_{T}$ is $u_{T}^{-12}\gamma_{T}=b^{4} c^{2}d^{7}\left(  c^{2}d+16b\right)  $. In particular, any prime dividing the minimal discriminant divides $\delta_{T,u_{T}}$. Now suppose $u_{T}=2c$ so that $v_{2}\!\left(  c\right)  \geq4$ with $bd\equiv3\ \operatorname{mod}4$. Then $\delta_{T,u_{T}}$ is even. Moreover, $u_{T}^{-12} \gamma_{T}=2^{-12}b^{4}c^{2}d^{7}\left(  c^{2}d+16b\right)  $ and so any odd prime dividing the minimal discriminant divides~$\delta_{T,u_{T}}$.

Now suppose $T\neq C_{3},C_{4}$. Then $u_{T}=2^{k}$ for some nonnegative
integer $k$. Consequently, if an odd prime $p$ divides the minimal
discriminant of $E_{T}$, then $p$ divides $\gamma_{T}$. It follows by
inspection that $p$ divides $\delta_{T,u_{T}}$. Furthermore, if $u_{T}=1$, then
$\gamma_{T}$ is the minimal discriminant and the lemma follows since
$\gamma_{T}$ being even implies that $\delta_{T,1}$ is even.

By the above, it remains to show the cases corresponding to $T\not =C_{3},C_{4},C_{5}%
,C_{7},C_{9}$ with~$u_{T}\neq1$ and $2$ dividing the minimal discriminant of
$E_{T}$.

\textbf{Case 1.} Suppose $T=C_{2}$ with $u_{T}\neq1$. Then $u_{T}$ is either
$2$ or $4$. Since $\delta_{T,u_{T}}$ is even if $u_{T}=2$, we have that the
lemma holds in this case. So suppose $u_{T}=4$ so that $v_{2}\!\left(
b^{2}d-a^{2}\right)  \geq8$ with $v_{2}\!\left(  a\right)  =v_{2}\!\left(
b\right)  =1$ and $a\equiv2\ \operatorname{mod}8$. In particular,
$\delta_{T,u_{T}}$ is always even, which concludes this case.

\textbf{Case 2.} Suppose $T=C_{6}$ with $u_{T}\neq1$. Then $u_{T}=2$ and
$v_{2}\!\left(  a+b\right)  \geq3$ and thus $\delta_{T,u_{T}}$ is even since
$a+9b$ is even.

\textbf{Case 3.} Suppose $T=C_{8},C_{10},C_{12}$ with $u_{T}\not =1$. Then
$u_{T}=2$ and $a$ is even in each case. By inspection, $\delta_{T,u_{T}}$ is even.

\textbf{Case 4.} Suppose $T=C_{2}\times C_{2}$ with $u_{T}\neq1$. Then
$u_{T}=2$ and $v_{2}\!\left(  a\right)  \geq4$ with $bd\equiv
1\ \operatorname{mod}4$. In particular, $\delta_{T,u_{T}}$ is even.

\textbf{Case 5. }Suppose $T=C_{2}\times C_{4}$ with $u_{T}\neq1$. Then
$u_{T}=2$ or $4$. In both cases, we have that $v_{2}\!\left(  a\right)  \geq
2$, and thus $\delta_{T,u_{T}}$ is even.

\textbf{Case 6.} Suppose $T=C_{2}\times C_{6}$ with $u_{T}\neq1$. Then
$u_{T}=4$ or $16$. If $u_{T}=4$, then $v_{2}\!\left(  a+b\right)  \geq2$ and
thus $\delta_{T,u_{T}}$ is even. Now suppose $u_{T}=16$ so that $v_{2}%
\!\left(  a+b\right)  =1$. In particular, $a+b\equiv2\ \operatorname{mod}4$,
and this implies that $a-b,\ 5a-b,$ and $9a-b$ are each divisible by $4$. In
fact, one of these must be divisible by $8$. This coupled with the fact that
$\left(  3a-b\right)  \left(  3a+b\right)  $ is divisible by $8$, implies that
$\delta_{T,u_{T}}$ is even since $v_{2}\!\left(  512\delta_{T,u_{T}}\right)
\geq10$.

\textbf{Case 7.} Suppose $T=C_{2}\times C_{8}$ with $u_{T}\neq1$. Then $u_{T}$
is either $16$ or $64$. If $u_{T}=16$, then $v_{2}\!\left(  a\right)  =1$ and
by inspection $\delta_{T,u_{T}}$ is even. So suppose $u_{T}=64$ so that
$v_{2}\!\left(  a\right)  \geq2$. By inspection $\delta_{T,u_{T}}$ is also
even under these assumptions, which concludes the proof.
\end{proof}


Next, we show that if $u_{T}^{-12}\gamma_{T}$ is the minimal discriminant of $E_{T}$, then
$N_{T}\leq\left\vert \delta_{T,u_{T}}\right\vert $. By
Lemma \ref{UpperBoundLemma}, it suffices to show that $v_{p}\!\left(
N_{T}\right)  \leq v_{p}\!\left(  \delta_{T,u_{T}}\right)  $ for each prime
$p$ for which $E_{T}$ has additive reduction. By \cite[Theorem 7.1]%
{2020arXiv200101016B}, there are necessary and sufficient conditions on the
parameters of $E_{T}$ to determine those primes $p$ for which $E_{T}$ has
additive reduction. For these primes $p$, \cite[Section 3]{BarRoy} gives
$v_{p}\!\left(  N_{T}\right)  $ in terms of the parameters of $E_T$. The following
result summarizes these conditions on the parameters, and as a consequence, we
obtain that $N_{T}\leq\left\vert \delta_{T,u_{T}}\right\vert $.

\begin{proposition}\label{ch:enh:bdcon}
Suppose that the parameters of $E_{T}$ satisfy the conclusion of Proposition~\ref{rationalmodels} and that $u_{T}^{-12}\gamma_{T}$ is the minimal discriminant of $E_T$. Then $E_{C_{2}\times C_{8}}$ is semistable and for
$T\neq C_{2}\times C_{8}$, $E_{T}$ has additive reduction at a prime $p$ if
and only if $p$ is listed in Table~\ref{ta:conddel}, and the corresponding condition on the parameters of $E_T$ is satisfied. For these conditions, the table also lists the following
information: the value(s) of $u_{T}$ and bounds on~$v_{p}\!\left(  N_{T}\right)$ and~$v_{p}\!\left(
\delta_{T,u_{T}}\right)  $.

In particular, if $u_{T}^{-12}\gamma_{T}$ is the minimal discriminant of $E_T$, then $N_{T}\leq\left\vert \delta_{T,u_{T}}\right\vert $.
\end{proposition}

{\renewcommand*{\arraystretch}{1.1}
\begin{longtable}[c]{cccccc}
\caption{Necessary and sufficient conditions on the parameters of $E_{T}$ to determine
additive reduction at a prime $p$. For the listed conditions, the value(s) of
$u_{T}$ for which $u_{T}^{-12}\gamma_{T}$ is the minimal discriminant 
of $E_{T}$ is given. Bounds on $v_{p}(N_{T})$ and $v_{p}(\delta_{T,u_{T}})$ are also
given for these conditions on the parameters.
}\\
\hline
$T$ & $p$ & Conditions on the parameters of $E_T$ & $v_{p}(N_{T})$ & $u_{T}$ & $v_{p}
(\delta_{T,u_{T}})$ \\
\hline
\endfirsthead
\caption[]{\emph{continued}}\\
\hline
$T$ & $p$ & Conditions on the parameters of $E_T$ & $v_{p}(N_{T})$ & $u_{T}$ & $v_{p}%
(\delta_{T,u_{T}})$\\
\hline
\endhead
\hline
\multicolumn{6}{r}{\emph{continued on next page}}
\endfoot
\hline
\endlastfoot
$C_{2}$ & $\neq2$ & $v_{p}(\gcd(a,bd))\geq1$ & $2$ & $1,2,4$ & $\geq2$\\\cmidrule(lr){2-6}
& $2$ & $v_{2}(b^{2}d-a^{2})\geq8$ with $a\equiv6\ \operatorname{mod}8$ & $4$
& $2$ & $\geq8$\\\cmidrule(lr){3-6}
&  & $4\leq v_{2}(b^{2}d-a^{2})\leq7$ with $v_{2}(a)=v_{2}(b)=1$ & $\leq5$ &
$2$ & $\geq8$\\\cmidrule(lr){3-6}
&  & $v_{2}(b^{2}d-a^{2})\leq3$ with $v_{2}(a)=v_{2}(b)=1$ & $7$ & $1$ &
$\geq8$\\\cmidrule(lr){3-6}
&  & $v_{2}(b)\geq3$ with $a\not \equiv 3\ \operatorname{mod}4$ or
$v_{2}(b)=0,2$ & $\leq8$ & $1$ & $\geq8$\\\cmidrule(lr){3-6}
&  & $v_{2}(b)=1$ with $v_{2}(a)\neq1$ & $\leq6$ & $1$ & $\geq8$\\\hline
$C_{3}$ & $\neq3$ & $v_{p}(a)\not \equiv 0\ \operatorname{mod}3$ & $2$ &
$c^{2}d$ & $\geq2$\\\cmidrule(lr){2-6}
& $3$ & $v_{3}(a)\geq1$ with $v_{3}(a)\not \equiv 0\ \operatorname{mod}3$ &
$\leq5$ & $c^{2}d$ & $\geq5$\\\cmidrule(lr){3-6}
&  & $v_{3}(a)\geq1$ with $v_{3}(a)\equiv0\ \operatorname{mod}3$ & $\leq4$ &
$c^{2}d$ & $\geq4$\\\hline
$C_{4}$ & $\neq2$ & $v_{p}(a)\equiv1\ \operatorname{mod}2$ & $2$ & $c,2c$ &
$\geq3$\\\cmidrule(lr){2-6}
& $2$ & $v_{2}(a)\equiv1\ \operatorname{mod}2$ with $v_{2}(a)\geq3$ & $\leq6$
& $c$ & $\geq6$\\\cmidrule(lr){3-6}
&  & $v_{2}(a)=4,6$ or $v_{2}(a)\geq8$ with $bd\equiv1\ \operatorname{mod}4$ &
$\leq5$ & $c$ & $\geq6$\\\cmidrule(lr){3-6}
&  & $v_{2}(a)=1,2$ & $3$ & $c$ & $\geq4$\\\hline
$C_{5}$ & $5$ & $v_{5}(a+3b)>0$ & $2$ & $1$ & $\geq2$\\\hline
$C_{6}$ & $2$ & $v_{2}(a+b)=1,2$ & $2$ & $1$ & $\geq2$\\\cmidrule(lr){2-6}
& $3$ & $v_{3}(a)>0$ & $2$ & $1,2$ & $\geq2$\\\hline
$C_{7}$ & $7$ & $v_{7}(a+4b)>0$ & $2$ & $1$ & $\geq2$\\\hline
$C_{8}$ & $2$ & $v_{2}(a)\geq2$ & $4$ & $1$ & $\geq4$\\\hline
$C_{9}$ & $3$ & $v_{3}(a+b)>0$ & $3$ & $1$ & $\geq 3$\\\hline
$C_{10}$ & $5$ & $v_{5}(a+b)>0$ & $2$ & $1,2$ & $\geq2$\\\hline
$C_{12}$ & $3$ & $v_{3}(a)>0$ & $2$ & $1,2$ & $\geq2$\\\hline
$C_{2}\times C_{2}$ & $\neq2$ & $v_{p}(d)=1$ & $2$ & $1,2$ & $\geq3$\\\cmidrule(lr){2-6}
& $2$  & $v_{2}(a)=1,2,3$ or $bd\not \equiv 1\ \operatorname{mod}4$ & $\leq6$ &
$1$ & $\geq7$\\\hline
$C_{2}\times C_{4}$ & $2$ & $v_{2}(a)=1$ & $3$ & $1$ & $\geq4$\\\cmidrule(lr){3-6}
&  & $v_{2}(a)\geq2$ with $v_{2}(a+4b)\leq3$ & $\leq4$ & $2$ & $\geq4$\\\hline
$C_{2}\times C_{6}$ & $3$ & $v_{3}(b)>0$ & $2$ & $1,4,16$ & $\geq3$%

\label{ta:conddel}	
\end{longtable}}
\begin{proof}
By \cite[Theorem 7.1]{2020arXiv200101016B}, we have necessary and sufficient
conditions on the parameters of~$E_{T}$ to determine those primes $p$ for
which $E_{T}$ has additive reduction. Moreover, Theorems~$3.2$, $3.3$, $3.5$,
$3.6$, $3.7$, and $3.8$ of \cite{BarRoy} provide necessary and sufficient
conditions to determine $v_{p}(N_{T})$ for those primes $p$ at which $E_{T}$
has additive reduction. These results verify the first four columns of Table
\ref{ta:conddel}. The fifth column is verified from the given conditions on
the parameters and \cite[Theorem 4.4]{2020arXiv200101016B}. 

We now show that the bound on $v_{p}\!\left(  \delta_{T,u_{T}}\right)  $ is as
given. To this end, let $u_{T}^{-12}\gamma_{T}$ denote the minimal
discriminant of $E_{T}$. We proceed by cases, and note that we will implicitly
assume the conditions listed in Table \ref{ta:conddel} that result in $E_{T}$
having additive reduction at a given prime~$p$.

\textbf{Case 1.} Suppose $T=C_{2}$. If $E_{T}$ has additive reduction at an
odd prime $p$, then $p$ divides $\gcd\!\left(  a,bd\right)  $. By inspection,
$v_{p}(\delta_{T,u_{T}})\geq2$ for each $u_{T}$. So suppose that $E_{T}$ has
additive reduction at $2$. If $u_{T}=1$, then $v_{2}\!\left(  \delta_{T,u_{T}%
}\right)  \geq8$. It remains to consider the cases corresponding to $u_{T}=2$.
Observe that in these cases, $v_{2}\!\left(  b^{2}d-a^{2}\right)  \geq4$ and
thus $v_{2}\!\left(  \delta_{T,u_{T}}\right)  \geq8$.

\textbf{Case 2.} Suppose $T=C_{3}$ and write $a=c^{3}d^{2}e$ for relatively
prime positive squarefree integers $d$ and $e$. If $E_{T}$ has additive
reduction at a prime $p\neq3$, then $v_{p}\!\left(  a\right)  \not \equiv
0\ \operatorname{mod}3$. Equivalently, $v_{p}\!\left(  de\right)  >0$ and hence $v_{p}\!\left(  \delta_{T,u_{T}}\right)  \geq2$. Now suppose that
$E_{T}$ has additive reduction at $3$. Then $v_3(a)>0$.
If $v_{3}\!\left(  a\right)
\not \equiv 0\ \operatorname{mod}3$, then $v_{3}\!\left(  de\right)  >0$. Now
observe that $v_{3}\!\left(  3d^{2}e^{4}\left(  c^{3}d^{2}e-27b\right)
\right)  \geq5$ and thus $v_{3}\!\left(  \delta_{T,u_{T}}\right)  \geq5$. If
instead $v_{3}\!\left(  a\right)
\equiv0\ \operatorname{mod}3$, then $v_{3}\!\left(  c\right)  \geq1$ and thus
$v_{3}\!\left(  3\left(  c^{3}d^{2}e-27b\right)  \right)  \geq4$. Consequently, $v_{3}\!\left(  \delta_{T,u_{T}}\right)  \geq4$.

\textbf{Case 3.} Suppose $T=C_{4}$ and write $a=c^{2}d$ for $d$ a positive
squarefree integer. If $E_{T}$ has additive reduction at a prime $p\neq2$,
then $v_{p}\!\left(  a\right)  $ is odd. Equivalently, $p$ divides $d$ and it
follows that $v_{p}\!\left(  \delta_{T,u_{T}}\right)  \geq3$ for each $u_{T}$.
So suppose that $E_{T}$ has additive reduction at $2$. Then $u_{T}=c$ and we
observe that $v_{2}\!\left(  \delta_{T,u_{T}}\right)  =1+v_{2}\!\left(
cd^{3}\right)  +v_{2}\!\left(  a+16b\right)  $. In particular, $v_{2}\!\left(
\delta_{T,u_{T}}\right)  \geq6$ if $v_{2}\!\left(  a\right)  \geq3$ and
$v_{2}\!\left(  \delta_{T,u_{T}}\right)  \geq4$ if $v_{2}\!\left(  a\right)
=1,2$.

\textbf{Case 4.} Suppose $T\neq C_{2},C_{3},C_{4}$. It is
verified by inspection that under the assumptions on the parameters of $E_{T}%
$, $v_{p}\!\left(  \delta_{T,u_{T}}\right)  $ satisfies the given inequality
in Table \ref{ta:conddel}.

The above and Lemma \ref{UpperBoundLemma} allow us to conclude that if
$u_{T}^{-12}\gamma_{T}$ is the minimal discriminant of $E_{T}$, then
$v_{p}\!\left(  N_{T}\right)  \leq v_{p}\!\left(  \delta_{T,u_{T}}\right)  $
for each prime $p$. In particular, $N_{T}\leq\left\vert \delta_{T,u_{T}%
}\right\vert $.
\end{proof}

\section{The Naive Height of \texorpdfstring{$E_{T}$}{}\label{naiveheight}}

Let $E_{T}$ be as defined in Table~\ref{ta:ETmodel} and write $a$ in terms of
$c,d,e$ as given in (\ref{valueofaC34}) for $T=C_{3},C_{4}$. Next, let
\begin{equation}
\left(  \alpha_{T},\beta_{T}\right)  =\left\{
\begin{array}
[c]{cl}%
\left(  \alpha_{T}\!\left(  a,b,d\right)  ,\beta_{T}\!\left(  a,b,d\right)
\right)   & \text{if }T=C_{2},C_{2}\times C_{2},\\
\left(  \alpha_{T}\!\left(  c,d,e,b\right)  ,\beta_{T}\!\left(
c,d,e,b\right)  \right)   & \text{if }T=C_{3},\\
\left(  \alpha_{T}\!\left(  c,d,b\right)  ,\beta_{T}\!\left(  c,d,b\right)
\right)   & \text{if }T=C_{4},\\
\left(  \alpha_{T}\!\left(  a,b\right)  ,\beta_{T}\!\left(  a,b\right)
\right)   & \text{otherwise,}%
\end{array}
\right.  \label{defalpbet}%
\end{equation}
be as defined in \cite[Tables 4 and 5]{2020arXiv200101016B}. These expressions are also found in \cite[definitions.sage]{GitHubMSR}. By \cite[Lemma
2.9 and Theorem 4.4]{2020arXiv200101016B}, the invariants $c_{4}$ and~$c_{6}$
associated to a global minimal model of $E_{T}$ are $u_{T}^{-4}\alpha_{T}$ and
$u_{T}^{-6}\beta_{T}$, respectively. In particular, the naive height of
$E_{T}$ is given by $u_{T}^{-12}\max\!\left\{  \left\vert \alpha_{T}%
^{3}\right\vert ,\beta_{T}^2\right\}  $. In this section, we show that if
$u_{T}^{-12}\gamma_{T}$ is the minimal discriminant of $E_{T}$, then
$\left\vert \delta_{T,u_{T}}\right\vert ^{l_{T}}<u_{T}^{-12}\max\!\left\{
\left\vert \alpha_{T}^{3}\right\vert ,\beta_{T}^{2}\right\}  $ where $l_{T}$
is as given in~(\ref{lTrevisited}).

\begin{lemma}
\label{LemhomPoly}Let $m_{T}$ and $l_{T}$ be as given in (\ref{lTrevisited}).%
\begin{equation}
{\renewcommand{\arraystretch}{1.2}\renewcommand{\arraycolsep}{.12cm}%
\begin{array}
[c]{ccccccccc}\hline
T & C_{2} & C_{2}\times C_{2} & C_{3} & C_{4} & C_{5},C_{6},C_{2}\times C_{4}
& C_{7},C_{8},C_{2}\times C_{6} & C_{9},C_{10} & C_{12},C_{2}\times
C_{8}\\\hline
m_{T} & 6 & 6 & 12 & 12 & 12 & 24 & 36 & 48\\\hline
l_{T} & 1.5 & 2 & 2 & 2.4 & 3 & 4 & 4.5 & 4.8\\\hline
\end{array}
}\label{lTrevisited}%
\end{equation}
Then in the notation of (\ref{delutdef}) and (\ref{defalpbet}), the following
identities hold:%
\begin{align*}
\alpha_{T}  & =\left\{
\begin{array}
[c]{cl}%
a^{\frac{m_{T}}{3}}\alpha_{T}\!\left(  1,1,\frac{b^{2}d}{a^{2}}\right)   &
\text{if }T=C_{2},\\
\left(  c^{3}d^{2}e\right)  ^{\frac{m_{T}}{3}}\alpha_{T}\!\left(
1,1,1,\frac{b}{c^{3}d^{2}e}\right)   & \text{if }T=C_{3},\\
\left(  c^{2}d\right)  ^{\frac{m_{T}}{3}}\alpha_{T}\!\left(  1,1,\frac
{b}{c^{2}d}\right)   & \text{if }T=C_{4},\\
\left(  ad\right)  ^{\frac{m_{T}}{3}}\alpha_{T}\!\left(  1,\frac{b}%
{a},1\right)   & \text{if }T=C_{2}\times C_{2},\\
a^{\frac{m_{T}}{3}}\alpha_{T}\!\left(  1,\frac{b}{a}\right)   &
\text{otherwise,}%
\end{array}
\right.  \\
\beta_{T}  & =\left\{
\begin{array}
[c]{cl}%
a^{\frac{m_{T}}{2}}\beta_{T}\!\left(  1,1,\frac{b^{2}d}{a^{2}}\right)   &
\text{if }T=C_{2},\\
\left(  c^{3}d^{2}e\right)  ^{\frac{m_{T}}{2}}\beta_{T}\!\left(
1,1,1,\frac{b}{c^{3}d^{2}e}\right)   & \text{if }T=C_{3},\\
\left(  c^{2}d\right)  ^{\frac{m_{T}}{2}}\beta_{T}\!\left(  1,1,\frac{b}%
{c^{2}d}\right)   & \text{if }T=C_{4},\\
\left(  ad\right)  ^{\frac{m_{T}}{2}}\beta_{T}\!\left(  1,\frac{b}%
{a},1\right)   & \text{if }T=C_{2}\times C_{2},\\
a^{\frac{m_{T}}{2}}\beta_{T}\!\left(  1,\frac{b}{a}\right)   &
\text{otherwise,}%
\end{array}
\right.  \\
\delta_{T,u_{T}}  & =\left\{
\begin{array}
[c]{cl}%
a^{\frac{m_{T}}{l_{T}}}\!\delta_{T,u_{T}}\!\left(  1,1,\frac{b^{2}d}{a^{2}%
}\right)   & \text{if }T=C_{2},\\
\left(  cde\right)  ^{\frac{m_{T}}{l_{T}}}\delta_{T,u_{T}}\!\left(
1,1,1,\frac{b}{c^{3}d^{2}e}\right)   & \text{if }T=C_{3},\\
\left(  cd\right)  ^{\frac{m_{T}}{l_{T}}}\delta_{T,u_{T}}\!\left(
1,1,\frac{b}{c^{2}d}\right)   & \text{if }T=C_{4},\\
 \left(ad\right)  ^{\frac{m_{T}}{l_{T}}}\delta_{T,u_{T}}\!\left(  1,\frac
{b}{a},1\right)   & \text{if }T=C_{2}\times C_{2},\\
a^{\frac{m_{T}}{l_{T}}}\delta_{T,u_{T}}\!\left(  1,\frac{b}{a}\right)   &
\text{otherwise.}%
\end{array}
\right.
\end{align*}

\end{lemma}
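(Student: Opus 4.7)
Each of the three asserted identities is a (weighted-)homogeneity relation for the corresponding polynomial in the parameters of $E_{T}$. Since $\alpha_{T}$, $\beta_{T}$, and $\delta_{T,u_{T}}$ are given by explicit formulas (in \cite[Tables 4, 5]{2020arXiv200101016B} and Table~\ref{ta:delT}), the plan is to verify these homogeneities by direct inspection.

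For $T \notin \{C_{2}, C_{3}, C_{4}, C_{2} \times C_{2}\}$, I would show that $\alpha_{T}(a,b)$, $\beta_{T}(a,b)$, and $\delta_{T,u_{T}}(a,b)$ are ordinary homogeneous polynomials in $(a,b)$ of degrees $m_{T}/3$, $m_{T}/2$, and $m_{T}/l_{T}$, respectively; the claimed identities are then the tautology $P(a,b) = a^{\deg P} P(1, b/a)$. The degrees of $\alpha_{T}$ and $\beta_{T}$ follow from the observation that every $a_{i}$ in Table~\ref{ta:ETmodel} is homogeneous of degree $i\,m_{T}/12$ in $(a,b)$, so by (\ref{basicformulas}) the associated $c_{4}$ and $c_{6}$ (which are precisely $\alpha_{T}$ and $\beta_{T}$) are homogeneous of degrees $m_{T}/3$ and $m_{T}/2$. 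The degree of $\delta_{T,u_{T}}$ is verified directly from Table~\ref{ta:delT}, noting that $\delta_{T,u_{T}}$ differs from $\delta_{T}$ only by a rational constant.

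For $T = C_{2}$, the Weierstrass model $y^{2} = x^{3} + 2ax^{2} + (a^{2} - b^{2}d)x$ shows that $b$ and $d$ enter only through the combination $b^{2}d$. I would check that $\alpha_{T}$, $\beta_{T}$, and $\delta_{T,u_{T}}$ are weighted-homogeneous in $(a, b^{2}d)$ of degrees $m_{T}/3$, $m_{T}/2$, $m_{T}/l_{T}$ with weights $\deg a = 1$ and $\deg(b^{2}d) = 2$, and the displayed identity then follows by evaluating this weighted homogeneity at $a \to 1$, $b^{2}d \to b^{2}d/a^{2}$. The case $T = C_{2} \times C_{2}$ is analogous, with the combination being $(ad, bd)$ of weights $(1,1)$. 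For $T = C_{3}$ and $T = C_{4}$, the identities for $\alpha_{T}$ and $\beta_{T}$ reduce to the generic $(a,b)$-homogeneity after substituting $a = c^{3}d^{2}e$ or $a = c^{2}d$ and rewriting the extracted factor $a^{m_{T}/3}$ or $a^{m_{T}/2}$ in terms of $c,d,e$.

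The one slightly delicate point is the identity for $\delta_{T,u_{T}}$ when $T = C_{3}, C_{4}$, where the extracted factor is $(cde)^{m_{T}/l_{T}}$ or $(cd)^{m_{T}/l_{T}}$ rather than a power of $a$. Here the key structural observation is that $\delta_{T}$ factors as a monomial in the auxiliary variables times a homogeneous polynomial in $(a,b)$: explicitly, $\delta_{T} = d^{2} e^{4} \cdot 3b(a - 27b)$ for $C_{3}$ and $\delta_{T} = cd^{3} \cdot b(a + 16b)$ for $C_{4}$, with the inner factor homogeneous of degree $2$ in $(a,b)$. Since $a^{2} \cdot d^{2} e^{4} = (cde)^{6}$ when $a = c^{3} d^{2} e$ and $a^{2} \cdot cd^{3} = (cd)^{5}$ when $a = c^{2} d$, the identities follow. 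Apart from this small computation, the lemma is a routine bookkeeping verification across the fourteen families; the main obstacle is simply tracking weights correctly in the four special cases.
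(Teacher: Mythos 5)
Your proof is correct. The paper does not in fact supply an explicit proof of Lemma~\ref{LemhomPoly}; it is treated as a direct computational verification (the next lemma is explicitly verified ``via a computer algebra system''), so a conceptual justification is welcome, and yours is sound. The homogeneity observation---that each $a_i$ in Table~\ref{ta:ETmodel} has degree $i\,m_T/12$ in $(a,b)$ so that $c_4$ and $c_6$ inherit degrees $m_T/3$ and $m_T/2$, and that $\delta_{T,u_T}$ is a rational multiple of the homogeneous polynomial $\delta_T$ of degree $m_T/l_T$---correctly reduces the generic case to the tautology $P(a,b)=a^{\deg P}P(1,b/a)$. The handling of $T=C_2$ and $T=C_2\times C_2$ as weighted homogeneity in $(a,b^2d)$ and in $(ad,bd)$, respectively, is also right (one checks, e.g., $\alpha_{C_2\times C_2}=16\left((ad)^2-(ad)(bd)+(bd)^2\right)$), and the ``slightly delicate'' rewriting of the extracted factor for $\delta_{T,u_T}$ when $T=C_3,C_4$---namely $a^2d^2e^4=(cde)^6$ for $a=c^3d^2e$ and $a^2cd^3=(cd)^5$ for $a=c^2d$---is exactly the computation that makes those two rows go through. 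Where the paper leans on a CAS check, you give the structural reason the identities hold, which is a genuine (if modest) improvement in transparency; the two are otherwise equivalent in content.
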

\begin{proof}
This is verified in \cite[Verifications.ipynb]{GitHubMSR}.
\end{proof}

Lemma \ref{LemhomPoly} leads us to the following lemma, which pertains to a
real-valued function and is the key to deducing our main theorem in the next section.

\begin{lemma}
\label{lemmaphi}For each $T$, let $u_{T}$ be as given in \cite[Theorem
4.4]{2020arXiv200101016B}. For $T=C_{4}$, set%
\[
v_{T}=\left\{
\begin{array}
[c]{cl}%
1 & \text{if }u_{T}=c,\\
2 & \text{if }u_{T}=2c.
\end{array}
\right.
\]
Then the real-valued function $\varphi_{T,u_{T}}:%
\mathbb{R}
\rightarrow%
\mathbb{R}
$ defined by%
\[
\varphi_{T,u_{T}}\!\left(  x\right)  =\left\{
\begin{array}
[c]{ll}%
u_{T}^{-12}\max\!\left\{  \left\vert \alpha_{T}\!\left(  1,1,x\right)
\right\vert ^{3},\beta_{T}\!\left(  1,1,x\right)  ^{2}\right\}  -\left\vert
\delta_{T,u_{T}}\!\left(  1,1,x\right)  \right\vert ^{l_{T}} & \text{if
}T=C_{2},\\
\max\!\left\{  \left\vert \alpha_{T}\!\left(  1,1,1,x\right)  \right\vert
^{3},\beta_{T}\!\left(  1,1,1,x\right)  ^{2}\right\}  -\left\vert
\delta_{T,u_{T}}\!\left(  1,1,1,x\right)  \right\vert ^{l_{T}} & \text{if
}T=C_{3},\\
v_{T}^{-12}\max\!\left\{  \left\vert \alpha_{T}\!\left(  1,1,x\right)
\right\vert ^{3},\beta_{T}\!\left(  1,1,x\right)  ^{2}\right\}  -\left\vert
\delta_{T,u_{T}}\!\left(  1,1,x\right)  \right\vert ^{l_{T}} & \text{if
}T=C_{4},\\
u_{T}^{-12}\max\!\left\{  \left\vert \alpha_{T}\!\left(  1,x,1\right)
\right\vert ^{3},\beta_{T}\!\left(  1,x,1\right)  ^{2}\right\}  -\left\vert
\delta_{T,u_{T}}\!\left(  1,x,1\right)  \right\vert ^{l_{T}} & \text{if
}T=C_{2}\times C_{2},\\
u_{T}^{-12}\max\!\left\{  \left\vert \alpha_{T}\!\left(  1,x\right)
\right\vert ^{3},\beta_{T}\!\left(  1,x\right)  ^{2}\right\}  -\left\vert
\delta_{T,u_{T}}\!\left(  1,x\right)  \right\vert ^{l_{T}} & \text{otherwise,}%
\end{array}
\right.
\]
is nonnegative. Moreover, if $\varphi_{T,u_{T}}\!\left(  x\right)  =0$, then
$x$ is irrational.
\end{lemma}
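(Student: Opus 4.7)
The plan is to verify the inequality $\varphi_{T,u_{T}}(x)\geq 0$ in a case-by-case manner, once for each of the fourteen families $T$ and for each admissible value of $u_{T}$. Lemma \ref{LemhomPoly} ensures that, after the indicated substitutions, $\alpha_{T}^{3}$, $\beta_{T}^{2}$, and $\delta_{T,u_{T}}^{l_{T}}$ all become ``weight-$m_{T}$'' expressions in the relevant variables, so the comparison is a genuine polynomial one rather than only an asymptotic one.

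First I would write out, for each $T$ and each $u_{T}$, the explicit integer-coefficient polynomials $\alpha_{T}(1,x)$, $\beta_{T}(1,x)$, and $\delta_{T,u_{T}}(1,x)$ (with the obvious modifications for $T=C_{2},C_{3},C_{4},C_{2}\times C_{2}$). The values $l_{T}\in\{3/2,2,12/5,3,4,9/2,24/5\}$ are rational, so choosing $n_{T}\in\mathbb{Z}^{+}$ with $n_{T}l_{T}\in\mathbb{Z}$ and $6\mid n_{T}$, I rephrase the target inequality as a genuine polynomial inequality obtained by raising both sides of $u_{T}^{-12}\max\{|\alpha_{T}|^{3},\beta_{T}^{2}\}\geq|\delta_{T,u_{T}}|^{l_{T}}$ to the $2n_{T}$-th power, thereby clearing the absolute value as well. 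Splitting into the two regimes $|\alpha_{T}|^{3}\geq \beta_{T}^{2}$ and $|\alpha_{T}|^{3}<\beta_{T}^{2}$, each regime reduces to showing $P_{T,u_{T}}(x)\geq 0$ for a specific integer polynomial $P_{T,u_{T}}$ on a specific real interval.

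To verify nonnegativity, the main lever is the Weierstrass identity $\alpha_{T}^{3}-\beta_{T}^{2}=1728\gamma_{T}$, which combined with the generous factorizations of $\delta_{T,u_{T}}$ recorded in Table \ref{ta:delT} lets one represent $P_{T,u_{T}}$ as a product of squared factors times a manifestly nonnegative remainder (often itself a sum of squares). For the second assertion, namely that $\varphi_{T,u_{T}}(x_{0})=0$ forces $x_{0}$ irrational, I would note that a rational zero of $\varphi_{T,u_{T}}$ would in particular be a rational root of $P_{T,u_{T}}$; by the rational root theorem the candidates form a finite list determined by the leading and constant coefficients, and one checks by direct substitution that none of them actually yields equality.

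The principal obstacle is computational volume rather than conceptual content. At the top of the difficulty range, $T=C_{12}$ and $T=C_{2}\times C_{8}$ have $m_{T}=48$ with $l_{T}=24/5$, so $n_{T}=5$ and the polynomial $P_{T,u_{T}}$ has degree on the order of several hundred; a clean factorization is only practical with the help of a computer algebra system. Conceptually, however, the argument is uniform across all $T$: the inequality always follows from $\alpha_{T}^{3}-\beta_{T}^{2}=1728\gamma_{T}$ together with the heavy linear-factor structure of $\delta_{T,u_{T}}$, and the irrationality clause is in each case a finite and explicit rational-root-theorem check.
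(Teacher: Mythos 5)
Your proposal is consistent with the paper's method: the paper simply states that the nonnegativity of $\varphi_{T,u_{T}}$ is ``verified via a computer algebra system,'' so both arguments ultimately rest on a CAS-based, case-by-case polynomial verification built on the homogenization identities of Lemma~\ref{LemhomPoly}. Your suggested organization---clearing the rational exponent $l_{T}$ and the absolute values by raising to an even integer power and then splitting on which of $\left\vert\alpha_{T}\right\vert^{3}$ and $\beta_{T}^{2}$ dominates---is a reasonable way to set that verification up, though the paper gives no detail of its internal strategy, and your claim that the identity $\alpha_{T}^{3}-\beta_{T}^{2}=1728\gamma_{T}$ always drives a sum-of-squares decomposition is asserted rather than checked. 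Where you genuinely diverge is the irrationality clause: the paper has the CAS solve $\varphi_{T,u_{T}}(x)=0$ outright, finds that zeros occur only for $T=C_{2}\times C_{4}$ with $u_{T}\in\{2,4\}$, and exhibits the solution set $\left\{\tfrac{1}{8}\left(-3\pm\sqrt{5}\right),\tfrac{1}{8}\left(1\pm\sqrt{5}\right)\right\}$ as visibly irrational. Your rational-root-theorem scan of the cleared-out polynomial would reach the same conclusion, but it is a more roundabout route (and the candidate list could be large); the paper's version also yields the useful qualitative fact that equality is attained only in the $C_{2}\times C_{4}$ case.
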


\begin{proof}
This is verified in \cite[Lemma3\_2.nb]{GitHubMSR}. Moreover,
$\varphi_{T,u_{T}}\!\left(  x\right)  =0$ has a solution if $T=C_{2}\times
C_{4}$ with $u_{T}=1,2,4$. In fact, $\left\{  \frac{1}{8}\left(  -3\pm\sqrt
{5}\right)  ,\frac{1}{8}\left(  1\pm\sqrt{5}\right)  \right\}  $ is the
solution set in this case which shows that if $\varphi_{T,u_{T}}\!\left(
x\right)  =0$, then $x$ is irrational.
\end{proof}

\begin{corollary}
\label{maincor}If $u_{T}^{-12}\gamma_{T}$ is the minimal discriminant of
$E_{T}$, then
\[
\left\vert \delta_{T,u_{T}}\right\vert ^{l_{T}}<u_{T}^{-12}\max\!\left\{
\left\vert \alpha_{T}^{3}\right\vert ,\beta_{T}^{2}\right\}  .
\]

\end{corollary}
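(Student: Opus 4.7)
\textbf{Proof proposal for Corollary \ref{maincor}.}

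The plan is to use the homogeneity identities in Lemma \ref{LemhomPoly} to reduce the claimed inequality to a statement about the single-variable function $\varphi_{T,u_T}$, and then invoke Lemma \ref{lemmaphi} together with the rationality of the parameter ratio to promote nonnegativity to strict positivity.

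I would begin by treating the generic case $T\neq C_{2},C_{3},C_{4},C_{2}\times C_{2}$. By Lemma \ref{LemhomPoly},
\[
|\alpha_{T}|^{3}=a^{m_{T}}\bigl|\alpha_{T}(1,b/a)\bigr|^{3},\qquad \beta_{T}^{2}=a^{m_{T}}\beta_{T}(1,b/a)^{2},\qquad |\delta_{T,u_{T}}|^{l_{T}}=a^{m_{T}}\bigl|\delta_{T,u_{T}}(1,b/a)\bigr|^{l_{T}},
\]
since $\tfrac{m_T}{l_T}\cdot l_T=m_T=3\cdot \tfrac{m_T}{3}=2\cdot \tfrac{m_T}{2}$. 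Dividing the desired inequality through by $a^{m_{T}}>0$ shows it is equivalent to $\varphi_{T,u_{T}}(b/a)>0$. By Lemma \ref{lemmaphi}, $\varphi_{T,u_{T}}(b/a)\geq 0$, and equality would force $b/a$ to be irrational; but $a,b\in\mathbb{Z}$ with $a>0$, so $b/a\in\mathbb{Q}$ and the inequality is strict.

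The remaining cases are handled by exactly the same mechanism, with a different rational ratio playing the role of $b/a$. For $T=C_{2}$, the common factor extracted from all three quantities is $a^{m_{T}}=a^{6}$, and the reduction yields $\varphi_{T,u_{T}}(b^{2}d/a^{2})>0$; note that $b^{2}d/a^{2}\in\mathbb{Q}$. For $T=C_{3}$, the common factor is $(c^{3}d^{2}e)^{m_{T}}$ from $\alpha_T^3$ and $\beta_T^2$ and $(cde)^{m_T}$ from $\delta_{T,u_T}^{l_T}$, which agree because $\tfrac{m_T}{l_T}=6=\tfrac{m_T}{2}=2\cdot\tfrac{m_T}{3}\cdot\tfrac{1}{\,\cdot\,}$; more directly, $(c^{3}d^{2}e)^{m_T/3}=(cde)^{m_T}\cdot$ \textit{stuff} and one verifies both sides pick up $(cde)^{m_T}$, giving $\varphi_{T,u_T}(b/(c^{3}d^{2}e))>0$ with $b/(c^{3}d^{2}e)\in\mathbb{Q}$. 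For $T=C_{4}$, the identity for $\delta_{T,u_T}$ involves $cd$ while the one for $\alpha_T,\beta_T$ involves $c^{2}d$; the apparent mismatch is absorbed by the factor $u_T^{-12}$ versus $v_T^{-12}$ appearing in the definition of $\varphi_{T,u_T}$, so once the powers of $c^{2}d$ are extracted the reduction is again to $\varphi_{T,u_T}(b/(c^{2}d))>0$. The case $T=C_{2}\times C_{2}$ is parallel, with common factor $(ad)^{m_{T}}$ and reduction to $\varphi_{T,u_{T}}(b/a)>0$.

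In every case, the key rational ratio lies in $\mathbb{Q}$ by Proposition \ref{rationalmodels}, so Lemma \ref{lemmaphi} forces the strict inequality. The only real obstacle is bookkeeping: verifying that the power of the common factor extracted from the left-hand side $|\delta_{T,u_T}|^{l_T}$ genuinely equals the power extracted from $u_T^{-12}\max\{|\alpha_T|^3,\beta_T^2\}$, i.e.\ that the exponents $m_T/l_T$, $m_T/3$, and $m_T/2$ in Lemma \ref{LemhomPoly} have been chosen so that $l_T\cdot(m_T/l_T)=3\cdot(m_T/3)=2\cdot(m_T/2)=m_T$. A glance at (\ref{lTrevisited}) confirms this in each row, and the $C_4$ case is reconciled via the auxiliary $v_T$. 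Once this matching is in place, the corollary follows immediately.
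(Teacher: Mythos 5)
Your proposal is correct and follows essentially the same approach as the paper: both factor out a common homogeneous scale via Lemma \ref{LemhomPoly}, reduce the inequality to strict positivity of $\varphi_{T,u_{T}}$ at a rational argument, and then invoke Lemma \ref{lemmaphi} together with the rationality of $b/a$ (resp.\ $b^{2}d/a^{2}$, $b/(c^{3}d^{2}e)$, $b/(c^{2}d)$) to upgrade nonnegativity to strict inequality. Your bookkeeping paragraph for $T=C_{3}$ is garbled as written (the common factor is $(cde)^{12}$, arising as $(c^{3}d^{2}e)^{12}/u_{T}^{12}$ with $u_{T}=c^{2}d$, which matches the $(cde)^{m_{T}/l_{T}\cdot l_{T}}=(cde)^{12}$ coming from $\delta_{T,u_{T}}$), and likewise for $C_{4}$ where $u_{T}^{-12}=v_{T}^{-12}c^{-12}$ combines with $(c^{2}d)^{12}$ to give $(cd)^{12}v_{T}^{-12}$; but the intended argument there is sound and matches the paper's.
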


\begin{proof}
Suppose $T=C_{3}$. Then $u_{T}=c^{2}d$ and by Lemma \ref{LemhomPoly},%
\[
u_{T}^{-12}\max\!\left\{  \left\vert \alpha_{T}^{3}\right\vert ,\beta_{T}%
^{2}\right\}  =\left(  cde\right)  ^{12}\max\!\left\{  \left\vert \alpha
_{T}\!\left(  1,1,1,\frac{b}{c^{3}d^{2}e}\right)  \right\vert ^{3},\beta
_{T}\!\left(  1,1,1,\frac{b}{c^{3}d^{2}e}\right)  ^{2}\right\}  .
\]
It follows by Lemmas \ref{LemhomPoly} and \ref{lemmaphi} that 
\[u_{T}^{-12}
\max\!\left\{  \left\vert \alpha_{T}^{3}\right\vert ,\beta_{T}^{2}\right\}
-\left\vert \delta_{T,u_{T}}\right\vert ^{l_{T}}=\left(  cde\right)
^{12}\varphi_{T,u_{T}}\!\left(  \frac{b}{c^{3}d^{2}e}\right). \]
By Lemma
\ref{lemmaphi}, this is positive which concludes this case.

Now suppose $T=C_{4}$. Then $u_{T}=v_{T}c$ and by Lemma \ref{LemhomPoly},%
\[
u_{T}^{-12}\max\!\left\{  \left\vert \alpha_{T}^{3}\right\vert ,\beta_{T}%
^{2}\right\}  =\left(  cd\right)  ^{12}v_{T}^{-12}\max\!\left\{  \left\vert
\alpha_{T}\!\left(  1,1,\frac{b}{c^{2}d}\right)  \right\vert ^{3},\beta
_{T}\!\left(  1,1,\frac{b}{c^{2}d}\right)  ^{2}\right\}  .
\]
From Lemmas \ref{LemhomPoly} and \ref{lemmaphi}, we deduce that 
\[u_{T}^{-12}\max\!\left\{  \left\vert \alpha_{T}^{3}\right\vert ,\beta_{T}%
^{2}\right\}  -\left\vert \delta_{T,u_{T}}\right\vert ^{l_{T}}=\left(
cd\right)  ^{12}\varphi_{T,u_{T}}\!\left(  \frac{b}{c^{2}d}\right).\]
Lemma
\ref{lemmaphi} now implies the desired inequality.

For $T\neq C_{3},C_{4}$, a similar argument shows via Lemmas \ref{LemhomPoly}
and \ref{lemmaphi} that
\[
u_{T}^{-12}\max\!\left\{  \left\vert \alpha_{T}^{3}\right\vert ,\beta_{T}%
^{2}\right\}  -\left\vert \delta_{T,u_{T}}\right\vert ^{l_{T}}=\left\{
\begin{array}
[c]{cl}%
a^{m_{T}}\varphi_{T,u_{T}}\!\left(  \frac{b^{2}d}{a^{2}}\right)  & \text{if
}T=C_{2},\\
\left(  ad\right)  ^{m_{T}}\varphi_{T,u_{T}}\!\left(  \frac{b}{a}\right)  &
\text{if }T=C_{2}\times C_{2},\\
a^{m_{T}}\varphi_{T,u_{T}}\!\left(  \frac{b}{a}\right)  & \text{if }T\neq
C_{2},C_{3},C_{4},C_{2}\times C_{2}.
\end{array}
\right.
\]
By Lemma \ref{lemmaphi}, we conclude that $\left\vert \delta_{T,u_{T}%
}\right\vert ^{l_{T}}<u_{T}^{-12}\max\!\left\{  \left\vert \alpha_{T}%
^{3}\right\vert ,\beta_{T}^{2}\right\}  $.
\end{proof}

\section{Lower Bounds for the Modified Szpiro Ratio\label{lowerboundsec}}
With the results established in the previous section, we are now ready to prove Theorem~\ref{Thm1}. We first show that $l_T$ is a lower bound for the modified Szpiro ratio, and consider the sharpness of $l_T$ in Theorem~\ref{thmshp}.
\begin{theorem}
\label{mainthm}Let $T$ be one of the fifteen torsion subgroups allowed by
Theorem \ref{MazurTorThm}. If $E/%
\mathbb{Q}
$ is an elliptic curve with $T\hookrightarrow E\!\left(
\mathbb{Q}
\right)  _{\text{tors}}$, then $\sigma_{m}\!\left(  E\right)  >l_{T}$ where
\[
{\renewcommand{\arraystretch}{1.2}\renewcommand{\arraycolsep}{.132cm}%
\begin{array}
[c]{ccccccccc}\hline
T & C_{1} & C_{2} & C_{3},C_{2}\times C_{2} & C_{4} & C_{5},C_{6},C_{2}\times
C_{4} & C_{7},C_{8},C_{2}\times C_{6} & C_{9},C_{10} & C_{12},C_{2}\times
C_{8}\\\hline
l_{T} & 1 & 1.5 & 2 & 2.4 & 3 & 4 & 4.5 & 4.8\\\hline
\end{array}
}%
\]

\end{theorem}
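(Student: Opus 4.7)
The plan is to combine the two main technical results prepared in the previous sections---Proposition \ref{ch:enh:bdcon} (the conductor bound $N_T\le|\delta_{T,u_T}|$) and Corollary \ref{maincor} (the naive-height bound $|\delta_{T,u_T}|^{l_T}<u_T^{-12}\max\{|\alpha_T^3|,\beta_T^2\}$)---into a single logarithmic inequality, after using Proposition \ref{rationalmodels} to reduce to the parameterized families.

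The case $T=C_1$ is already recorded in the introduction: the trivial bound $|\Delta_E^{\min}|<\max\{|c_4^3|,c_6^2\}$ together with the standard inequality $N_E\le|\Delta_E^{\min}|$ (a consequence of Ogg's formula) immediately yields $\sigma_m(E)>1=l_{C_1}$, with no need for any parameterization.

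For each non-trivial torsion subgroup $T$, and with the single exception of $T=C_3$ with $j(E)=0$ and $E\not\cong_{\mathbb{Q}}E_{C_3}(24,1)$, Proposition \ref{rationalmodels} provides integers $a,b$ (and possibly $d$) such that $E$ is $\mathbb{Q}$-isomorphic to the curve $E_T$ of Table \ref{ta:ETmodel}. Since $\sigma_m$ is a $\mathbb{Q}$-isomorphism invariant, I may assume $E=E_T$. The identifications $c_4=u_T^{-4}\alpha_T$ and $c_6=u_T^{-6}\beta_T$ from the opening of Section \ref{naiveheight} give $\max\{|c_4^3|,c_6^2\}=u_T^{-12}\max\{|\alpha_T^3|,\beta_T^2\}$. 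Proposition \ref{ch:enh:bdcon} yields $N_E=N_T\le|\delta_{T,u_T}|$, while Corollary \ref{maincor} yields $|\delta_{T,u_T}|^{l_T}<\max\{|c_4^3|,c_6^2\}$. Chaining these (and using $\log N_E>0$, since no elliptic curve has conductor $1$) produces $l_T\log N_E\le l_T\log|\delta_{T,u_T}|<\log\max\{|c_4^3|,c_6^2\}$, so $\sigma_m(E)=\log\max\{|c_4^3|,c_6^2\}/\log N_E>l_T$.

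What remains is the exceptional subcase $T=C_3$ with $j(E)=0$ and $E\ne E_{C_3}(24,1)$, in which Proposition \ref{rationalmodels}(4) forces $E\cong_{\mathbb{Q}} E_{C_3^0}(a):y^2+ay=x^3$ for some positive cubefree $a$. Direct application of (\ref{basicformulas}) gives $c_4=0$, $c_6=-216a^2$, $\Delta=-27a^4$, all associated to a global minimal model because $a$ is cubefree; hence the naive height is $c_6^2=46656\,a^4$ and the desired bound $\sigma_m(E)>2$ reduces to showing $N_E<216\,a^2$. Writing $a=2^\alpha 3^\beta m$ with $\gcd(m,6)=1$ and $m$ cubefree, the contribution to $N_E$ from primes $p\ge5$ dividing $m$ is at most $\mathrm{rad}(m)^2\le m^2$ (using $f_p\le 2$ together with cubefreeness), so the argument hinges on producing tight bounds for $f_2$ and $f_3$ specific to the family $y^2+ay=x^3$; the generic bounds $f_2\le 8$, $f_3\le 5$ are far too weak to close the gap. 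This is the main obstacle, and I would handle it by running Tate's algorithm directly on this one-parameter family, stratifying by the residues of $\alpha$ and $\beta$ and exploiting the fact that $j=0$ severely constrains the possible Kodaira types, to produce the sharper conductor-exponent bounds needed for $N_E<216\,a^2$.
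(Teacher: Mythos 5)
Your argument for $C_1$ and for the fourteen non-trivial torsion subgroups outside the exceptional $j=0$, $T=C_3$ subcase is exactly the paper's: reduce to the parameterized families via Proposition~\ref{rationalmodels}, chain Proposition~\ref{ch:enh:bdcon} with Corollary~\ref{maincor}, and take logarithms. Your inequality chain $l_T\log N_E\le l_T\log|\delta_{T,u_T}|<\log\max\{|c_4^3|,c_6^2\}$ is a mild streamlining of the paper's, which routes through the intermediate quantity $\frac{l_T\log|\delta_{T,u_T}|}{\log N_E}$ before invoking $\frac{\log|\delta_{T,u_T}|}{\log N_E}\ge 1$, but the content is identical.

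Where you leave a genuine gap is the subcase $E\cong_{\mathbb{Q}}E_{C_3^0}(a):y^2+ay=x^3$ with $a$ a positive cubefree integer. You correctly reduce the claim to $N_E<216a^2$ and correctly observe that the generic exponent bounds $f_2\le 8$, $f_3\le 5$ cannot deliver this, but you stop at the statement that you ``would handle it by running Tate's algorithm directly on this one-parameter family.'' That is a promissory note, not a proof. The paper closes this gap by citing \cite[Theorem~3.4]{BarRoy}, which gives the sharp bound $N_E\le\mu$ with $\mu=27a^2$ for precisely this family; combined with $\mu^2=729a^4<46656a^4=c_6^2$, this yields $\sigma_m(E)>\frac{2\log\mu}{\log N_E}\ge 2$. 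Your diagnosis of what is needed and your proposed strategy are both sound---the cited theorem is exactly the product of running Tate's algorithm at $p=2,3$ on $y^2+ay=x^3$ with $a$ cubefree---but as written the crucial family-specific conductor computation is absent, so this case remains open in your proposal.
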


\begin{proof}
Let $E/%
\mathbb{Q}
$ be an elliptic curve and let $c_{4}$ and $c_{6}$ be the invariants
associated to a global minimal model of $E$. Let $\Delta$ denote the minimal discriminant of $E$.
Then the identity $1728\Delta
=c_{4}^{3}-c_{6}^{2}$,
implies that $\left\vert \Delta\right\vert <\max\!\left\{  \left\vert
c_{4}^{3}\right\vert ,c_{6}^{2}\right\}  $ and thus $\sigma_{m}\!\left(
E\right)  >1$. Now suppose $E\!\left(
\mathbb{Q}
\right)  _{\text{tors}}\cong T$ where $T$ is non-trivial. Suppose further that
if $T=C_{3}$, then the $j$-invariant of $E$ is non-zero. Then Proposition
\ref{rationalmodels} implies that $E$ is $%
\mathbb{Q}
$-isomorphic to $E_{T}$ for some integers $a,b,$ and $d$. By \cite[Lemma 2.9
and Theorem 4.4]{2020arXiv200101016B}, the invariants $c_{4}$ and $c_{6}$
associated to a global minimal model of $E$ are $u_{T}^{-4}\alpha_{T}$ and
$u_{T}^{-6}\beta_{T}$, respectively. Next, let $N_{E}$ denote the conductor of
$E$. By Proposition~\ref{ch:enh:bdcon} and Corollary~\ref{maincor}, we have
that%
\[
N_{E}\leq\left\vert \delta_{T,u_{T}}\right\vert ^{l_{T}}<u_{T}^{-12}%
\max\!\left\{  \left\vert \alpha_{T}^{3}\right\vert ,\beta_{T}^{2}\right\}
=\max\!\left\{  \left\vert c_{4}^{3}\right\vert ,c_{6}^{2}\right\}  .
\]
Taking logarithms results in
\[
\log N_{E}\leq l_{T}\log\left\vert \delta_{T,u_{T}}\right\vert <\log\max\!\left\{
\left\vert c_{4}^{3}\right\vert ,c_{6}^{2}\right\}  \qquad\Longrightarrow
\qquad\frac{l_{T}\log\left\vert \delta_{T,u_{T}}\right\vert }{\log N_{E}%
}<\sigma_{m}\!\left(  E\right)  .
\]
By Proposition \ref{ch:enh:bdcon}, $\frac{\log\left\vert \delta_{T,u_{T}%
}\right\vert }{\log N_{E}}\geq1$ and hence $l_{T}<\sigma_{m}\!\left(
E\right)  $.

It remains to show the case when $E\!\left(
\mathbb{Q}
\right)  _{\text{tors}}\cong C_{3}$ with $j$-invariant $0$. If this is the
case, then Proposition \ref{rationalmodels} implies that $E$ is $%
\mathbb{Q}
$-isomorphic to either $E_{C_{3}}\!\left(  24,1\right)  $ or $E_{C_{3}^{0}%
}:y^{2}+ay=x^{3}$ for some positive cubefree integer $a$. By the above,
$\sigma_{m}\!\left(  E_{C_{3}}\!\left(  24,1\right)  \right)  >2$ and so it
suffices to consider the case when $E$ is $%
\mathbb{Q}
$-isomorphic to $E_{C_{3}^{0}}$. By \cite[Theorem 4.4]{2020arXiv200101016B},
the minimal discriminant of $E$ is $-27a^{4}$ and the invariants $c_{4}$ and
$c_{6}$ associated to a global minimal model of $E$ are $0$ and $-216a^{2}$,
respectively. Now let $\mu=27a^{2}$. By \cite[Theorem 3.4]{BarRoy},%
\[
N_{E}=3^{f_{p}}\prod_{p|a,\ p\neq3}p^{2}\qquad\text{where }f_{p}\leq\left\{
\begin{array}
[c]{cl}%
3 & \text{if }v_{3}\!\left(  a\right)  =0,\\
5 & \text{if }v_{3}\!\left(  a\right)  =1,2.
\end{array}
\right.
\]
Hence $N_{E}\leq\mu$, and the proof now follows since
\[
N_{E}\leq\mu^{2}<c_{6}^{2}\qquad\Longrightarrow\qquad\frac{2\log\mu}{\log
N_{E}}<\sigma_{m}\!\left(  E\right)  .\qedhere
\]
\end{proof}

It remains to show the sharpness of the lower bounds $l_{T}$. To this end, we first consider the following lemma, which will aid us in establishing that the lower bounds are sharp. We note that the first part of the proof is identical to that of~\cite[Lemma 2.14 (1)]{MR3479215}.

\begin{lemma}\label{polysquarefree}
Let $f\!\left(  x\right)  $ be a squarefree polynomial with content $1$ such
that each of the irreducible factors of $f\!\left(  x\right)  $ have degree
at most $3$. If there exists an $a\in%
\mathbb{Z}
$ such that $f\!\left(  a\right)  $ is squarefree, then
$\left\{  x\in%
\mathbb{Z}
\mid f\!\left(  x\right)  \text{ is squarefree}\right\}  $ is infinite.
\end{lemma}

\begin{proof}
First, suppose that there is no prime $p$ such that $p|f\!\left(  x\right)  $
for each $x\in%
\mathbb{Z}
$. If there exists an $a\in%
\mathbb{Z}
$ such that $p\nmid f\!\left(  a\right)  $ for each prime $p\leq\deg f$, then
it was shown in \cite[Theorem 1.2]{MR3533307} that there is a positive
constant $c=c\!\left(  f\right)  $ such that%
\[
\#\left\{  x\in%
\mathbb{Z}
\cap\left[  1,X\right]  \mid f\!\left(  x\right)  \text{ is squarefree}%
\right\}  =\left(  c+o\!\left(  1\right)  \right)  X\qquad\text{as
}X\rightarrow\infty.
\]
In particular, $\left\{  x\in%
\mathbb{Z}
\mid f\!\left(  x\right)  \text{ is squarefree}\right\}  $ is infinite.

Now suppose that there is an $a\in%
\mathbb{Z}
$ such that $f\!\left(  a\right)  $ is squarefree. Let%
\[
q_{1}=\prod_{\substack{p|f\!\left(  x\right)  \ \forall x\in%
\mathbb{Z}
,\\p|f\!\left(  a\right)  \ \text{for }p\leq\deg f}}p\qquad\text{and}\qquad
q_{2}=\prod_{\substack{p\nmid f\!\left(  a\right)  \\\text{for }p\leq\deg
f}}p.
\]
Then $g\!\left(  x\right)  =\frac{1}{q_{1}}f\!\left(  a+q_{1}^{2}q_{2}%
^{2}x\right)  \in%
\mathbb{Z}
\!\left[  x\right]  $ and by construction, $g\!\left(  x\right)  $ has the
property that there is no prime $p$ such that $p|g\!\left(  x\right)  $ for
each $x\in%
\mathbb{Z}
$. We also have that $p\nmid g\!\left(  a\right)  $ for each prime $p$ with
$p\leq\deg g$. By \cite[Theorem 1.2]{MR3533307}, $\left\{  x\in%
\mathbb{Z}
\mid g\!\left(  x\right)  \text{ is squarefree}\right\}  $ is infinite. Moreover, if $p$ is a prime such that $p|q_{1}$, then $p\nmid g\!\left(  x\right)
$ for each $x\in%
\mathbb{Z}
$. The result now follows since%
\begin{align*}
\left\{  x\in%
\mathbb{Z}
\mid g\!\left(  x\right)  \text{ is squarefree}\right\}    & =\left\{  x\in%
\mathbb{Z}
\mid f\!\left(  a+q_{1}^{2}q_{2}^{2}x\right)  \text{ is squarefree}\right\}
\\
& \subseteq\left\{  x\in%
\mathbb{Z}
\mid f\!\left(  x\right)  \text{ is squarefree}\right\}  . \qedhere
\end{align*}

\end{proof}

\begin{theorem}
\label{thmshp}The lower bounds $l_{T}$ in Theorem \ref{mainthm} are sharp.
\end{theorem}

\begin{proof}
Let $n$ be an integer and define the elliptic curve $F_{C_{1}}=F_{C_{1}%
}\!\left(  n\right)  $ where $F_{C_{1}}:y^{2}+y=x^{3}+\left(  3n+1\right)  x$.
For $T\neq C_{1}$, let $A_{T},$ $B_{T}$, and $D_{T}$ be as given below.%
\[
{\renewcommand{\arraystretch}{1.2}\renewcommand{\arraycolsep}{.1cm}%
\begin{array}
[c]{ccccccccccc}\hline
T & C_{2} & C_{3} & C_{4} & C_{5},C_{9},C_{2}\times C_{4} & C_{6},C_{7} &
C_{8},C_{10} & C_{12} & C_{2}\times C_{2} & C_{2}\times C_{6} & C_{2}\times
C_{8}\\\hline
A_{T} & -1 & 1 & 256n^{2} & 2n+1 & 3n+1 & 4n+1 & 6n+1 & 16n & 8n+3 &
4n\\\hline
B_{T} & 8 & n & 4n^{2}-1 & n & n & n & n & 4n+1 & -1 & n+1\\\hline
D_{T} & n &  &  &  &  &  &  & 1 &  & \\\hline
\end{array}
}%
\]
For $T\not =C_{1},C_{2},C_{2}\times C_{2}$, let $F_{T}\!\left(  n\right)
=E_{T}\!\left(  A_{T},B_{T}\right)  $. For $T=C_{2},C_{2}\times C_{2}$, let
$F_{T}\!\left(  n\right)  =E_{T}\!\left(  A_{T},B_{T},D_{T}\right)  $. It follows that $T\hookrightarrow F_{T}\!\left(  n\right)\!\left(\mathbb{Q}\right)  $ for
each $n$.

From the definition of $F_{T}\!\left(  n\right)  $ for $T\neq C_{1}$, we
deduce by \cite[Corollary 7.2]{2020arXiv200101016B} that $F_{T}$ is semistable
for each integer $n$. For $T=C_{1}$, observe that the invariant $c_{4}$ and
$c_{6}$ of $F_{C_{1}}$ are $-48\left(  3n+1\right)  $ and $-216$,
respectively. Since $c_{6}$ is six-powerfree, we conclude that $F_{C_{1}}$ is
given by a global minimal model. In particular, the minimal discriminant
$\mathfrak{D}_{C_{1}}$ of $F_{C_{1}}$ is the discriminant of $F_{C_{1}}$.
Since $\mathfrak{D}_{C_{1}}=-\left(  12n+7\right)  \left(  144n^{2}%
+60n+13\right)  $, we see that $\gcd\!\left(  216,\mathfrak{D}_{C_{1}%
}\right)  =1$. From this, we establish that $F_{C_{1}}$ is semistable for each
integer $n$.

Now let $\mathfrak{D}_{T}$ denote the discriminant of $F_{T}$ and denote the
invariants $c_{4}$ and $c_{6}$ of $F_{T}$ by $\mathfrak{A}_{T}$ and
$\mathfrak{B}_{T}$, respectively. By the above and \cite[Theorem
4.4]{2020arXiv200101016B} for $T\neq C_{1}$, we have that the minimal
discriminant of $F_{T}$ is $w_{T}^{-12}\mathfrak{D}_{T}$ where
\[
{\renewcommand{\arraystretch}{1.2}\renewcommand{\arraycolsep}{.25cm}%
\begin{array}
[c]{ccccccccc}\hline
T & C_{4} & C_{2}\times C_{8} & C_{2}\times C_{6} & C_{2},C_{2}\times C_{2} &
\text{otherwise} &  &  & \\\hline
w_{T} & 32n & 64 & 16 & 2 & 1 &  &  & \\\hline
\end{array}
}%
\]
In particular, the invariants $c_{4,T}$ and $c_{6,T}$ associated to a global
minimal model of $F_{T}$ are $w_{T}^{-4}\mathfrak{A}_{T}$ and $w_{T}%
^{-6}\mathfrak{B}_{T}$, respectively. By \cite[Theorem4\_3.nb]{GitHubMSR}, we have that for each integer $n$ with $\left\vert n\right\vert >1$ it is the case that
\[
\mathfrak{h}_{T}=\max\!\left\{  \left\vert c_{4,T}^{3}\right\vert ,c_{6,T}%
^{2}\right\}  =\left\{
\begin{array}
[c]{cl}%
c_{6,T}^{2} & \text{if }T=C_{3},C_{5},C_{7},C_{8},C_{9},\\
\left\vert c_{4,T}^{3}\right\vert  & \text{otherwise.}%
\end{array}
\right.
\]
In Table \ref{ta:sharp}, we give the expression for $\mathfrak{h}_{T}\!\left(
n\right)  $. Next, let $\mathfrak{f}_{T}\!\left(  n\right)  $ be as given in
Table \ref{ta:sharp}. Symbolic expressions for $\mathfrak{h}_{T}\!\left(
n\right)  $ and $\mathfrak{f}_{T}\!\left(
n\right)  $ are found in \cite[definitions.sage]{GitHubMSR}.
\vspace{-0.5em} {\renewcommand*{\arraystretch}{1.27}%
\begin{longtable}{cC{3.3in}C{1.6in}}
\caption{The Quantities $\mathfrak{h}_{T}\!\left(  n\right)  $ and $\mathfrak{f}_{T}\!\left(
n\right)  $ }
		\label{ta:sharp}	\\
\hline
$T$ & $\mathfrak{h}_{T}\!\left(  n\right)  $ & $\mathfrak{f}_{T}\!\left(
n\right)  $\\
\hline
\endfirsthead
\caption[]{\emph{continued}}\\
\hline
$T$ & $\mathfrak{h}_{T}\!\left(  n\right)  $ & $\mathfrak{f}_{T}\!\left(
n\right)  $\\
\hline
\endhead
\hline
\multicolumn{3}{r}{\emph{continued on next page}}
\endfoot
\hline
\endlastfoot
	
$C_{1}$ & $|144n+48|^{3}$ & $(12n+7)(144n^{2}+60n+13)$\\\hline
$C_{2}$ & $| 192n+1| ^{3}$ & $n(64n-1)$\\\hline
$C_{3}$ & $(216n^{2}-36n+1)^{2}$ & $n(27n-1)$\\\hline
$C_{4}$ & $|5136n^{4}-264n^{2}+1|^{3}$ & $n(2n-1)(2n+1)(20n^{2}-1)$\\\hline
$C_{5}$ & $(421n^{4}+526n^{3}+206n^{2}+26n+1)^{2}(5n^{2}+4n+1)^{2}$ &
$n(2n+1)(25n^{2}+15n+1)$\\\hline
$C_{6}$ & $|120n^{3}+84n^{2}+18n+1|^{3}|6 n+1|^{3}$ &
$n(12n+1)(3n+1)(4n+1)$\\\hline
$C_{7}$ & $(577801n^{12}+2519622n^{11}+4989285n^{10}+5920782n^{9}%
+4680102n^{8}+2590434n^{7}+1027173n^{6}+293286n^{5}+59682n^{4}+8414n^{3}%
+777n^{2}+42n+1)^{2}$ & $n(2n+1)(3n+1)(49n^{3}+49n^{2}+14n+1)$\\\hline
$C_{8}$ & $(2696n^{8}+5984n^{7}+5424n^{6}+2272n^{5}+184n^{4}-224n^{3}%
-96n^{2}-16n-1)^{2}(56n^{4}+16n^{3}-16n^{2}-8n-1)^{2}$ &
$n(4n+1)(2n+1)(3n+1)(8n^{2}-1)$\\\hline
$C_{9}$ & $(22329n^{18}+242514n^{17}+1250883n^{16}+4061502n^{15}%
+9272961n^{14}+15760494n^{13}+20613420n^{12}+21173562n^{11}+17291556n^{10}%
+11299356n^{9}+5916807n^{8}+2474496n^{7}+819423n^{6}+211626n^{5}%
+41607n^{4}+5994n^{3}+594n^{2}+36n+1)^{2}$ & $n(n+1)(2n+1)(3n^{2}%
+3n+1)(9n^{3}+18n^{2}+9n+1)$\\\hline
$C_{10}$ & $|635920n^{12}+2733440n^{11}+5299680n^{10}+6129200n^{9}%
+4710480n^{8}+2534880n^{7}+979520n^{6}+273840n^{5}+54960n^{4}+7720n^{3}%
+720n^{2}+40n+1|^{3}$ & $n(2n+1)(4n+1)(3n+1)(20n^{2}+10n+1)(5n^{2}+5n+1)$\\\hline
$C_{12}$ & $|42787896n^{12}+129338064n^{11}+173452752n^{10}+137824296n^{9}%
+72709428n^{8}+26936592n^{7}+7205496n^{6}+1405032n^{5}+198498n^{4}%
+19836n^{3}+1332n^{2}+54n+1|^{3}|366n^{4}+348n^{3}+120n^{2}+18n+1|^{3}$ &
$n(6n+1)(4n+1)(5n+1)(6n^{2}+6n+1)(26n^{2}+10n+1)(21n^{2}+9n+1)$\\\hline
$C_{2}\times C_{2}$ & $|208n^{2}-8n+1|^{3}$ & $n(4n+1)(12n-1)$\\\hline
$C_{2}\times C_{4}$ & $|976n^{4}+672n^{3}+200n^{2}+24n+1|^{3}$ &
$n(2n+1)(10n+1)(6n+1)$\\\hline
$C_{2}\times C_{6}$ & $|439104n^{6}+1005408n^{5}+958080n^{4}+486360n^{3}%
+138720n^{2}+21078n+1333|^{3}|84n^{2}+66n+13|^{3}$ &
$(3n+1)(12n+5)(18n+7)(2n+1)(5n+2)(8n+3)$\\\hline
$C_{2}\times C_{8}$ & $|51361n^{16}+180064n^{15}+301720n^{14}+511840n^{13}%
+1140780n^{12}+2129632n^{11}+2812328n^{10}+2658400n^{9}+1853894n^{8}%
+973088n^{7}+387560n^{6}+116768n^{5}+26220n^{4}+4256n^{3}+472n^{2}+32n+1|^{3}$
& $n(n+1)(2n+1)(3n+1)(n^{2}-2n-1)(7n^{2}+6n+1)(5n^{2}+4n+1)$
\end{longtable}}

\noindent It is then verified that $\operatorname{rad}\!\left(  w_{T}%
^{-12}\mathfrak{D}_{T}\right)  =\operatorname{rad}\!\left(  \mathfrak{f}%
_{T}\!\left(  n\right)  \right)  $. Now define%
\[
S_{T}=\left\{  n\in%
\mathbb{Z}
\mid\left\vert n\right\vert >1,\ \mathfrak{f}_{T}\!\left(  n\right)  \text{ is
squarefree}\right\}  .
\]
Observe that if $n\in S_{T}$, then the conductor $N_{F_{T}}$ of $F_{T}$
satisfies $N_{F_{T}}=\left\vert \mathfrak{f}_{T}\!\left(  n\right)
\right\vert $ since $F_{T}$ is semistable. In \cite[Theorem4\_3.ipynb]{GitHubMSR}, it is shown that for each $T$, $\mathfrak{f}_{T}\!\left(  n\right)$ has content~$1$ and that there exists an $a \in \mathbb{Z}$ such that $\mathfrak{f}_{T}\!\left(  a\right)$ is squarefree. Moreover, each $\mathfrak{f}_{T}\!\left(  n\right)$ has the property that each of its irreducible factors has degree at most $3$. Consequently, $S_T$ is infinite for each $T$ by Lemma~\ref{polysquarefree}.

From \cite[Theorem4\_3.ipynb]{GitHubMSR}, we deduce that
\[
\lim_{n\in S_{T},\ \left\vert n\right\vert \rightarrow\infty}\sigma
_{m}\!\left(  F_{T}\!\left(  n\right)  \right)  =\lim_{n\in S_{T},\ \left\vert
n\right\vert \rightarrow\infty}\frac{\log\mathfrak{h}_{T}}{\log\left\vert
\mathfrak{f}_{T}\!\left(  n\right)  \right\vert }=l_{T}.
\]
In particular, we have that the lower bound in\ Theorem \ref{mainthm} is sharp
since the above shows that there is an infinite sequence of elliptic curves
$F_{T}\!\left(  n\right)  $ with $T\hookrightarrow F_{T}\!\left(  n\right) \!\left(\mathbb{Q}\right)  $
with the property that the modified Szpiro ratio of $F_{T}\!\left(  n\right)
$ tends to $l_{T}$ as $\left\vert n\right\vert \rightarrow\infty$.
\end{proof}

We conclude with the following application of Corollary~\ref{enhMSRtrivial}:

\begin{corollary}\label{lastcor}
There are infinitely many $n\in%
\mathbb{Z}
$ for which the elliptic curve $F_{C_{1}}\!\left(  n\right)  :y^{2}%
+y=x^{3}+\left(  3n+1\right)  x$ has trivial torsion subgroup and global
Tamagawa number equal to $1$.
\end{corollary}

\begin{proof}
Assume the notation in the proof of Theorem \ref{thmshp}. The proof
established that $F_{C_{1}}\!\left(  n\right)  $ is semistable for each $n\in%
\mathbb{Z}
$ and that $S_{C_{1}}=\left\{  n\in%
\mathbb{Z}
\mid\mathfrak{f}_{C_{1}}\!\left(  n\right)  \ \text{is squarefree}\right\}  $
is infinite. For $n\in S_{C_{1}}$,%
\[
\sigma_{m}\!\left(  F_{C_{1}}\!\left(  n\right)  \right)  =\frac
{\log\mathfrak{h}_{C_{1}}\!\left(  n\right)  }{\log\left\vert \mathfrak{f}%
_{C_{1}}\!\left(  n\right)  \right\vert }.
\]
By \cite[Corollary4\_4.nb]{GitHubMSR}, we have that if $\left\vert
n\right\vert \geq13$, then $\mathfrak{h}_{C_{1}}\!\left(  n\right)
<\mathfrak{f}_{C_{1}}\!\left(  n\right)  ^{1.5}$. In particular, if $n\in
S_{C_{1}}$ with $\left\vert n\right\vert \geq13$, then $\sigma_{m}\!\left(
F_{C_{1}}\!\left(  n\right)  \right)  <1.5$. By Corollary \ref{enhMSRtrivial},
it follows that the torsion subgroup of $F_{C_{1}}\!\left(  n\right)  $ is trivial.
Lastly, the minimal discriminant of $F_{C_{1}}\!\left(  n\right)  $ is
$\mathfrak{f}_{C_{1}}\!\left(  n\right)  $. Since $\mathfrak{f}_{C_{1}%
}\!\left(  n\right)  $ is squarefree, we conclude by  Tate's Algorithm~\cite{MR0393039} that for each prime $p$, the local Tamagawa number at $p$ of $F_{C_{1}}\!\left(
n\right)  $ is equal to $1$. In particular, the
global Tamagawa number of $F_{C_{1}}\!\left(
n\right)  $ is equal to $1$.
\end{proof}

\noindent \textbf{Acknowledgments.} The author thanks Chung Pang Mok for suggesting the investigation of \cite[Theorem 4]{MR1635726} in the context of the modified Szpiro conjecture and Edray Goins and Manami Roy for helpful comments in the preparation of this article. In the original submission, \cite{MR1654759} was used to show the lower bounds' sharpness in Theorem~\ref{Thm1} under the assumption of the $ABC$ Conjecture. The author is grateful to the referee for their comments and suggestion of using \cite{MR3533307} in place of \cite{MR1654759}. Lastly, the author thanks Andrew Booker for his helpful comments, which helped simplify the proof of Lemma~\ref{polysquarefree}.

\bibliographystyle{amsplain}
\bibliography{bibliography}

\end{document}